\theoremstyle{plain}
\newtheorem{thm}{Theorem}
\theoremstyle{definition}
\newtheorem{dfn}{Definition}
\theoremstyle{remark}
\newtheorem{rmk}{Remark}
\renewcommand{\vec}{\mathbf}
\newcommand{\IR}{\mathbb{R}}
\newcommand{\IN}{\mathbb{N}}
\newcommand{\bmat}[1]{\begin{bmatrix} #1 \end{bmatrix}}
\begin{document}

\begin{frontmatter}
	\title{Discrete-time port-Hamiltonian systems:\\ A definition based on symplectic integration
	}

	\author[1]{Paul Kotyczka\corref{cor1}
	} 
		\ead{kotyczka@tum.de}
	\author[2]{Laurent Lefèvre
	} 
		\ead{laurent.lefevre@lcis.grenoble-inp.fr}
	
	\cortext[cor1]{Corresponding author}
	\address[1]{Technical University of Munich, Department of Mechanical Engineering, Chair of Automatic Control, Boltzmannstra{\ss}e 15, 85748 Garching, Germany}
	\address[2]{Univ. Grenoble Alpes, LCIS, 50 rue Barth{\'e}l{\'e}my de Laff{\'e}mas, 26902 Valence, France}

	\begin{abstract}                
		We introduce a new definition of discrete-time port-Hamiltonian systems (PHS), which results from structure-preserving discretization of explicit PHS in time. We discretize the underlying continuous-time Dirac structure with the collocation method and add discrete-time dynamics by the use of symplectic numerical integration schemes. The conservation of a discrete-time energy balance -- expressed in terms of the discrete-time Dirac structure -- extends the notion of symplecticity of geometric integration schemes to open systems. We discuss the energy approximation errors in the context of the presented definition and show that their order is consistent with the order of the numerical integration scheme. Implicit Gauss-Legendre methods and Lobatto IIIA/IIIB pairs for partitioned systems are examples for integration schemes that are covered by our definition. The statements on the numerical energy errors are illustrated by elementary numerical experiments.
	\end{abstract}
	\begin{keyword}
	    Port-Hamiltonian systems, Dirac structures, discrete-time systems, geometric numerical integration, symplectic methods.
	\end{keyword}
\end{frontmatter}

\section{Introduction}
\label{sec:04:010}

The \emph{geometric} integration of ordinary differential equations, see e.\,g. \cite{leimkuhler2004simulating}, \cite{hairer2006geometric}, is an important approach to perform long-time simulations of Hamiltonian systems. \emph{Symplectic} integration conserves not only the symplectic form in the (mechanical) phase space, but also invariants of motion (Casimirs, first integrals). Symplectic integrators that are derived based on discrete versions of Hamilton's principle are called \emph{variational integrators}. They ``\emph{work very well for both conservative and dissipative or forced mechanical systems}''\footnote{See \cite{lew2004overview}, Section 2.}. 

The port-Hamiltonian (PH) approach, see \cite{duindam2009modeling} for an overview, is very appealing for modeling, simulation and control of complex multiphysics systems. PH systems generalize the \emph{Hamiltonian} system representation by the additional definition of \emph{ports}, which are pairs of power variables that characterize energy exchange in the system and over its boundary. The numerical integration of PH systems has to account for this energy exchange. Besides the error of the numerical solution itself, the error of the energy transmitted over the \emph{discrete ports} is of fundamental interest in the simulation of PH systems. 

Most existing works on the discrete-time formulation of PH systems make use of a \emph{discrete gradient}, defined from a finite-differences point of view (\cite{goren2008gradient}, \cite{aoues2013canonical}, \cite{falaize2016passive}). A generic definition of PH dynamics on discrete manifolds (spaces that locally look like discretization grids or the set of floating-point numbers) is given in  \cite{talasila2006discrete}. Objects and operations from differential geometry are adapted to the discrete setting and discrete-time Dirac structures are defined. In the discrete setting, the chain rule is not valid, which means that the change of energy over a sampling interval is only \emph{approximated} by a product of the discrete gradient $\Game_x H(x)$ and the increment $\Delta x$ of the state.

We give a new definition of discrete-time Dirac structures and discrete-time PHS, which is based on the approximation of the continuous-time structural energy balance and  symplectic numerical time integration by \emph{collocation methods}. The corresponding quadrature formulas allow for quantitative statements on the approximation error both of the solution and the supplied energy. We show that only Gauss-Legendre collocation, applied to linear PHS, guarantees an \emph{exact} discrete energy balance as defined in \cite{celledoni2017energy}, Def. III.2. Our definition includes discretization schemes, which yield a non-exact discrete energy balance. An example are the Lobatto IIIA/IIIB methods for partitioned systems. The energy error is then \emph{consistent with}, i.\,e. it has the same order as the chosen integration scheme.

The paper is organized as follows. In Section \ref{sec:04:020}, we introduce the considered class of finite-dimensional PH systems and the underlying Dirac structure.  Section \ref{sec:04:030} contains as main results the definitions of discrete-time Dirac structures and PH systems based on the collocation method. In Section \ref{sec:04:040}, we consider Gauss-Legendre methods and Lobatto IIIA/IIIB pairs and discuss the order of the energy approximations. Section \ref{sec:04:050} illustrates the statements on the elementary example of a linear undamped/damped oscillator. In the concluding Section \ref{sec:04:060}, we sum up the paper, and we point out perspectives for future work based on the presented results.

This paper is inspired by early results on the symplectic time integration of PH systems using Gauss-Legendre collocation \cite{kotyczka2018discrete}. The main novelties are the precise consideration of the different energy approximations, the application of the ideas to more general schemes including $s$-stage Lobatto pairs for partitioned systems, the analysis and order proofs for the energy errors, and the extended section on numerical experiments.

\section{Finite-dimensional PH systems}
\label{sec:04:020}
We consider the class of \emph{lossless} finite-dimensional PH systems in an \emph{explicit} input-state-output representation (see e.\,g. \cite{duindam2009modeling} or \cite{schaft2014port})
\begin{subequations}
\label{eq:04:0010}
\begin{align}
	\label{eq:04:0010a}
    \dot{\vec x}(t) &= \vec J(\vec x(t)) \nabla H(\vec x(t)) + \vec G(\vec x(t)) \vec u(t)\\
    \label{eq:04:0010b}
    \vec y(t) &= \vec G^T(\vec x(t)) \nabla H(\vec x(t))
\end{align}
\end{subequations}
with state vector $\vec x \in \IR^n$, collocated in- and output vectors $\vec u, \vec y \in \IR^m$. The Hamiltonian $H: \IR^n \rightarrow \IR$ is bounded from below with a strict minimum in $\vec x^*$, which is the equilibrium state for $\vec u \equiv \vec 0$. By skew-symmetry of the interconnection matrix $\vec J = - \vec J^T$ and the definition of the collocated output, the differential energy balance
\begin{equation}
    \label{eq:04:0020}
     \dot H(\vec x(t)) = \vec y^T(t) \vec u(t)
\end{equation} 
holds, or in integral form,
\begin{equation}
    \label{eq:04:0030}
    H(\vec x(t_2)) - H(\vec x(t_1)) = \int_{t_1}^{t_2}\!\!\vec y^T(s) \vec u(s) \, ds, \quad\! \forall t_1 \!\leq\! t_2,
\end{equation}
which shows passivity (see e.\,g. \cite{schaft2017l2}) of the state representation \eqref{eq:04:0010}. The
energy balance is a \emph{structural} or \emph{geometric} property, i.\,e. it holds independently of $H(\vec x)$. \emph{Flow} and \emph{effort} vectors are defined as
\begin{equation}
    \label{eq:04:0040}
    \vec f(t) := - \dot{\vec x}(t), \qquad \vec e(t) := \nabla H(\vec x(t)).
\end{equation}
Because of $\dot H = (\nabla H)^T \dot{\vec x} = - \vec e^T \vec f$, they represent \emph{power-conjugated}, \emph{dual} variables. The differential energy balance \eqref{eq:04:0020} can be written as the power balance equation on the \emph{bond space} $\mathcal F \times \mathcal E$, with $\mathcal F = \IR^n \times \IR^m \ni (\vec f, \vec y)$ and $\mathcal E = \IR^n \times \IR^m \ni (\vec e, \vec u)$:
\begin{equation}
    \label{eq:04:0050}    
    \vec e^T(t) \vec f(t) + \vec y^T(t) \vec u(t) = 0.
\end{equation}
By 
\begin{subequations}
\label{eq:04:0060}
\begin{align}
	\label{eq:04:0060a}
    - \vec f(t) &= \vec J(\vec x(t)) \vec e(t) + \vec G(\vec x(t)) \vec u(t)\\
    \label{eq:04:0060b}
    \vec y(t) &= \vec G^T(\vec x(t)) \vec e(t),
\end{align}
\end{subequations}
the bond variables are constrained to a subspace (i.\,e. the graph of the skew-symmetric map defined in \eqref{eq:04:0060}), on which in particular \eqref{eq:04:0050} holds. This subspace is called a \emph{Dirac structure}. For more details on Dirac structures and PH systems, see e.\,g. \cite{schaft2017l2}, Chapter 6.

\section{Discrete-time PH systems based on collocation}
\label{sec:04:030}
We define the class of \emph{discrete-time PH systems}, which arise from a \emph{discrete-time Dirac structure}. The latter is obtained by applying the collocation method to the class of PH systems \eqref{eq:04:0010} and by defining in an appropriate manner \emph{discrete flow and effort vectors} for every sampling interval. Special attention is paid on the discretization of the energy balance \eqref{eq:04:0030}. For a \emph{consistent} discrete-time approximation of the PH system \eqref{eq:04:0010}, both the \emph{supplied energy} (right hand side of \eqref{eq:04:0030}) and the \emph{stored energy} (left hand side of \eqref{eq:04:0030}) must be approximated with the same order.

\subsection{Collocation method}
\label{subsec:04:030:010}
We consider equidistant sampling intervals $I^k = [t^{k}_0, t^{k}_{s+1}]=[(k-1)h,kh]$, $k \in \IN$ for the time $t$ with $t^k_{s+1} = t^k_0 + h$, see Fig. \ref{fig:04:0010}. With $t=((k-1)+\tau)h$, the sampling intervals are parametrized by the normalized time $\tau \in [0,1]$. The polynomial approximations of the system variables will be denoted with a tilde. As described in Section II.1.2 of \cite{hairer2006geometric}, the numerical approximation of the solution $\vec x(t)$ of \eqref{eq:04:0010} is given by the vector $\tilde{\vec x}(t)\in \IR^n$ of \emph{collocation polynomials} of degree $s$. Assume first the initial value $\vec x_0^k:= \tilde{\vec x}(t_0^k) = \vec x(t_0^k)$ to be known. The continuous numerical solution $\tilde{\vec x}(t)$ is then the vector of polynomials whose time derivative matches the right hand side of \eqref{eq:04:0010a} in the $s$ collocation points $t_i^k := t_0^k + c_i h$ with $0 \leq c_i \leq 1$:
\begin{align}
	\label{eq:04:0070}
        \dot{\tilde{\vec x}}(t_i^k) &= - \vec f_i^k, \hspace{2cm} i=1,\ldots s,\\
    \nonumber
        - \vec f_i^k &= \left. \left ( \vec J(\vec x) \nabla H(\vec x) \right) \right|_{\vec x = \tilde{\vec x}(t_i^k)} + \vec G(\tilde{\vec x}(t_i^k)) \vec u(t_i^k).
\end{align}

\emph{Notation:} Arguments in \emph{latin} letters ($t$ or $s$ under the integral), refer to time functions evaluated on $I^k$. \emph{Greek} letters ($\tau$ or $\sigma$) refer to the same function, mapped to the normalized interval $[0,1]$.

\begin{figure}
    \centering
    \includegraphics[scale=0.97]{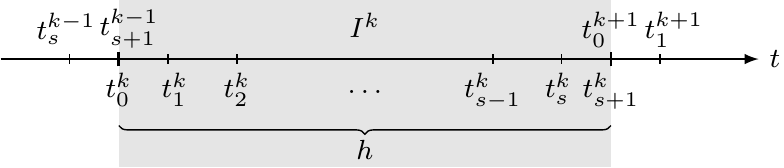}
    \caption{Sampling interval $I^k$ with interior collocation points $t_i^k = t_0^k + c_i h$, $i=1,\ldots,s$.}
    \label{fig:04:0010}
\end{figure}

\subsection{Approximation of flow and state variables}
\label{subsec:04:030:020}
Based on $\vec f_i^k \in \IR^n$, $i=1,\ldots, s$, according to \eqref{eq:04:0070}, the interpolation formula 
\begin{equation}
    \label{eq:04:0080}
    \dot{\tilde{\vec x}}(t_0^k + \tau h) =: - \tilde{\vec f}(t_0^k + \tau h) = - \sum_{i=1}^s \vec f_i^k \ell_i(\tau),
\end{equation}
with $\ell_i$ the $i$-th Lagrange interpolation polynomial
\begin{equation}
    \label{eq:04:0090}
    \ell_i(\tau) =  \prod_{\substack{j=1\\j\neq i}}^{s} \frac{\tau - c_j}{c_i - c_j},  \qquad \tau \in [0,1],
\end{equation}
gives a polynomial approximation of $\dot{\vec x}(t)$ on $I^k$. The flow coordinates are merged in the \emph{discrete-time flow vector}
\begin{equation}
    \label{eq:04:0100}
    \vec f^k := \bmat{ (\vec f_1^k)^T & \ldots & (\vec f_s^k)^T}^T \in \IR^{sn}, 
\end{equation}
based on which the numerical solution $\tilde{\vec x}(t^k + \tau h)$, $\tau \in [0,1]$ is obtained by integration of \eqref{eq:04:0080}:
\begin{equation}
	\label{eq:04:0110}
    \tilde{\vec x}(t_0^k + \tau h) 
    = \tilde{\vec x}(t_0^k) - h \sum_{j=1}^s \Big( \vec f_j^k \int_0^\tau \ell_j(\sigma) \, d\sigma \Big).
\end{equation}
The values $\vec x_i^k := \tilde{\vec x}(t_i^k)$ of the numerical solution inside and at the end of the interval $I^k$ are then computed as
\begin{align}
	\label{eq:04:0120}
    \vec x_i^k &= \vec x_0^k - h \sum_{j=1}^s a_{ij} \vec f_j^k, \quad i=1,\ldots, s,\\
    \label{eq:04:0130}
    \vec x_{s+1}^k &= \vec x_0^k - h \sum_{j=1}^s b_{j} \vec f_j^k, 
\end{align}
with\footnote{These values are, together with $c_i$, the coefficients of the Butcher table for the Runge-Kutta (RK) interpretation of the collocation method, see \cite{hairer2006geometric}, Theorem II.1.4.} $(i,j = 1,\ldots, s)$
\begin{equation}
    \label{eq:04:0140}
    a_{ij} = \int_0^{c_i} \ell_j(\sigma)\, d\sigma, \quad  b_j = \int_0^1 \ell_j(\sigma)\, d\sigma.
\end{equation}  
In \emph{continuous} collocation methods, the numerical solution at the start $t_0^{k+1} = t_{s+1}^k$ of the subsequent interval is initialized by $\vec x_0^{k+1} = \vec x_{s+1}^k$.

\subsection{Effort approximation and discrete structure equation}
\label{subsec:04:030:030}
The definition of the discrete flow coordinates $\vec f_1^k, \ldots, \vec f_s^k$ in \eqref{eq:04:0070} requires to evaluate the effort vector $\nabla H(\vec x(t))$, the input $\vec u(t)$ and the interconnection and input matrices $\vec J(\vec x(t))$, $\vec G(\vec x(t))$ \emph{in the flow collocation points} $c_i$,
\begin{equation}
    \label{eq:04:0150}
    \vec e_i^k :=  \left. \nabla H(\vec x) \right|_{\vec x_i^k}, \quad 
    \vec u_i^k := \vec u(t_0^k + c_i h),
\end{equation}
and
\begin{equation}
	\label{eq:04:0160}
    \vec J_i^k := \vec J(\vec x_i^k), \quad \vec G_i^k := \vec G(\vec x_i^k)
\end{equation}
for $i=1,\ldots,s$. The discrete-time counterpart of \eqref{eq:04:0060a} is then
\begin{equation}
    \label{eq:04:0170}
    - \vec f_i^k = \vec J_i^k \vec e_i^k + \vec G_i^k \vec u_i^k, 
\end{equation}
with $\vec J_i^k = - (\vec J_i^k)^T$. With $\vec e_i^k \in \IR^n$ and $\vec u_i^k \in \IR^m$  the \emph{discrete effort} and \emph{discrete input coordinates} according to \eqref{eq:04:0150}, the polynomial approximations of the effort and the input vector are
\begin{equation}
    \label{eq:04:0180}
    \tilde{\vec e}(t_0^k + \tau h) = \sum_{i=1}^{s} \vec e_i^k \ell_i(\tau), \quad\!
    \tilde{\vec u}(t_0^k + \tau h) =\ \sum_{i=1}^{s} \vec u_i^k \ell_i(\tau).
\end{equation}
In accordance with the approximate flows, we define the \emph{discrete-time effort vector} and the \emph{discrete-time input vector}
\begin{align}
    \label{eq:04:0190}
    \vec e^k &= \bmat{ (\vec e_1^k)^T & \ldots & (\vec e_s^k)^T}^T \in \IR^{sn},\\
    \label{eq:04:0200}
    \vec u^k &= \bmat{(\vec u_1^k)^T & \ldots & (\vec u_s^k)^T}^T \in \IR^{sm}.
\end{align}
Defining the block-diagonal matrices
\begin{equation}
	\label{eq:04:0210}
    \begin{split}
    \vec  J^k = - (\vec J^k)^T &= \mathrm{blockdiag}( \vec J_1^k, \ldots, \vec J_s^k), \\
    \vec G^k &= \mathrm{blockdiag}( \vec G_1^k, \ldots, \vec G_s^k),
    \end{split}
\end{equation}
the \emph{structure equation} \eqref{eq:04:0170} on the sampling interval $I^k$ can be rewritten as
\begin{equation}
    \label{eq:04:0220}
    - \vec f^k = \vec J^k \vec e^k + \vec G^k \vec u^k.
\end{equation}

\begin{rmk}
    Defining the discrete effort and input coordinates based on \emph{different} collocation points $d_1,\ldots,d_r$ is also conceivable. In this case, the terms of the right hand side of \eqref{eq:04:0170} must be replaced by interpolations between the effort collocation points according to \eqref{eq:04:0180}. In this paper, we restrict ourselves to \emph{identical} collocation points for the flow and effort variables, and to the explicit representation of the resulting Dirac structure and PH system.
\end{rmk}

\subsection{Discrete-time supplied energy}
\label{subsec:04:030:040}
{Since the instantaneous (local) power balance results trivially from the equations of the Dirac structure, we seek to express a discrete-time counterpart of the integral energy balance equation \eqref{eq:04:0030} on the time interval $I^k$. To this end, we integrate the polynomial approximation of instantaneous power $- \tilde{\vec e}^T(t_0^k + \tau h) \tilde{\vec f}(t_0^k + \tau h)$ over the normalized time interval $[0,1]$, and obtain an \emph{approximation of supplied energy} on the sampling interval $I^k$
\begin{equation}
\label{eq:04:0240}
    \Delta \tilde H^k :=  - h \int_{0}^{1} \tilde{\vec e}^T(t_0^k + \sigma h) \tilde{\vec f}(t_0^k + \sigma h) \, d\sigma.
\end{equation}
Substituting the definitions \eqref{eq:04:0080} and \eqref{eq:04:0180} of the polynomial flow and effort approximations, we obtain the bilinear form
\begin{equation}
    \label{eq:04:0250}
    \Delta \tilde H^k = - h (\vec e^k)^T \vec M \vec f^k
\end{equation}
with the symmetric matrix $\vec M = \vec M^T \in \IR^{sn\times sn}$, 
\begin{equation}
    \label{eq:04:0260}
    \vec M \!=\! \bmat{m_{11} \!&\! \ldots \!&\! m_{1s}\\ \vdots \!&\! \ddots \!&\! \vdots \\ m_{s1} \!&\! \ldots \!&\! m_{ss}} \!\otimes \vec I_n,
    \;\; m_{ij} \!=\!\! \int_0^1\!\! \ell_i(\sigma) \ell_j(\sigma) d\sigma,
\end{equation}
where $m_{ij} = m_{ji}$ and $\otimes$ denotes the Kronecker product.

\begin{rmk}
	We can understand the term $\vec M \vec e^k$ as a generalization of the \emph{discrete gradient} and $- h \vec f^k$ as a vector generalizing the increment of the numerical solution in the integration step.
\end{rmk}

\subsection{Discrete-time Dirac structure}
\label{subsec:04:030:050}

We provide conditions under which the polynomial approximation of the power variables leads to the definition of a discrete-time Dirac structure. Substituting the relation \eqref{eq:04:0220} in the right hand side of the discrete energy balance  \eqref{eq:04:0250}, we obtain
\begin{equation}
    \label{eq:04:0270}
    - h (\vec e^k)^T \vec M \vec f^k = h (\vec e^k)^T \vec M \vec J^k \vec e^k + h (\vec e^k)^T \vec M \vec G^k \vec u^k.
\end{equation}
At this stage, we want to recover a discrete-time equivalent of the structural power balance \eqref{eq:04:0050}. To this end, the first term on the right hand side must vanish: $h (\vec e^k)^T \vec M \vec J^k \vec e^k \overset{!}{=} 0$ for all $\vec e^k \in \IR^{sn}$, or written element-wise (recall $m_{ij}=m_{ji}$),
\begin{equation}
	\label{eq:04:0280}
    h \sum_{i=1}^s \sum_{j=1}^s (\vec e_i^k)^T m_{ij} \vec J_j^k \vec e_j^k  \overset{!}{=} 0.
\end{equation}
By skew-symmetry of $\vec J_j^k$, we have $(\vec e_j^k)^T m_{jj} \vec J_j^k \vec e_j^k = 0$ for all $j=1,\ldots,s$. The remaining elements of the sum cancel to zero, if $ (\vec e_i^k)^T m_{ij} \vec J_j^k \vec e_j^k = - (\vec e_j^k)^T m_{ji} \vec J_i^k \vec e_i^k$ holds for all $i \neq j$. With $(\vec e_i^k)^T m_{ij} \vec J_j^k \vec e_j^k =  - ((\vec e_i^k)^T m_{ij} (\vec J_j^k)^T \vec e_j^k)^T = - (\vec e_j^k)^T m_{ji} \vec J_j^k \vec e_i^k$, the requirement translates to
\begin{equation}
	\label{eq:04:0290}
    (\vec e_j^k)^T m_{ji} \vec J_i^k \vec e_i^k = (\vec e_j^k)^T m_{ji} \vec J_j^k \vec e_i^k \qquad \forall i \neq j,
\end{equation}
which is true if either of the following conditions holds.

\noindent
\hspace*{2ex} (C1)\quad  $m_{ij} = 0$ for all $i\neq j$,\\[1ex]
\hspace*{2ex} (C2)\quad $\vec J_i^k = \vec J_j^k = const.$ for all $i,j=1,\ldots,s$.\\[-2ex]

While (C1) is an orthogonality condition on the choice of the approximation basis in the \emph{discretization method}, the constant interconnection structure according to (C2) is a \emph{system property}\footnote{Darboux-Lie theorem states that every Poisson system can be transformed locally to a canonical Hamiltonian system with a constant skew-symmetric matrix (\cite{hairer2006geometric}, Section VII.3). Condition (C2) is then satisfied if the original PH systems is expressed in terms of these canonical local coordinates.}.

In both cases (C1) or (C2), the discrete energy balance \eqref{eq:04:0270} boils down to 
\begin{equation}
    \label{eq:04:0300}
    h (\vec e^k)^T \vec M \vec f^k + h(\vec e^k)^T \vec M \vec G^k \vec u^k = 0.
\end{equation}
The definition of a \emph{discrete-time output vector}
\begin{equation}
    \label{eq:04:0310}
   {\vec y^k} := (\vec G^k)^T \vec M \vec e^k
\end{equation}
yields (using $\vec M = \vec M^T$)
\begin{equation}
    \label{eq:04:0320}
    h (\vec M\vec e^k)^T \vec f^k + h (\vec y^k)^T \vec u^k = 0,
\end{equation}
which represents a \emph{structural balance equation} for the supplied energy in $I^k$ (or average supplied power, if divided by $h$). We are ready to define the discrete-time Dirac structure, which is based on the polynomial approximation of the power variables.

\begin{thm}[Discrete-time Dirac structure]
    \label{thm:04:0010}
    Given the $s$ collocation points $0 \leq c_i \leq 1$, $i = 1,\ldots,s$. The system of equations \eqref{eq:04:0220}, \eqref{eq:04:0310},  i.\,e.
    \begin{equation}
    \begin{split}
        \label{eq:04:0330}
        - \vec f^k &= \vec J^k  \vec e^k + \vec G^k \vec u^k\\
        \vec y^k &= (\vec G^k)^T \vec M \vec e^k,
    \end{split}
    \end{equation}    
    with discrete flow, effort and input vectors $\vec f^k$, $\vec e^k$, $\vec u^k$ according to \eqref{eq:04:0100}, \eqref{eq:04:0190}, \eqref{eq:04:0200}, the block matrices $\vec J^k = - (\vec J^k)^T$, $\vec G^k$ according to \eqref{eq:04:0210}, and the symmetric block matrix $\vec M = \vec M^T$ according to \eqref{eq:04:0260}, represents a \emph{discrete-time Dirac structure} on the time interval $I^k\!=\![(k-1)h, kh]$, if either condition (C1) or (C2) is satisfied.
\end{thm}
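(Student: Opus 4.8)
The plan is to exhibit the solution set of \eqref{eq:04:0330} as a \emph{maximal isotropic} subspace of a discrete bond space, this being the defining property of a Dirac structure (cf. \cite{schaft2017l2}, Ch.~6). I would take as flow space $\mathcal F^k = \IR^{sn}\times\IR^{sm}$ with coordinates $(\vec f^k,\vec y^k)$ and as effort space $\mathcal E^k=\IR^{sn}\times\IR^{sm}$ with coordinates $(\bs\varepsilon^k,\vec u^k)$, where $\bs\varepsilon^k:=\vec M\vec e^k$ is precisely the generalized discrete gradient isolated above. On $\mathcal F^k\times\mathcal E^k$ I use the canonical symmetric power pairing $\langle\!\langle(\vec f_1,\vec y_1,\bs\varepsilon_1,\vec u_1),(\vec f_2,\vec y_2,\bs\varepsilon_2,\vec u_2)\rangle\!\rangle = \bs\varepsilon_1^T\vec f_2+\bs\varepsilon_2^T\vec f_1+\vec u_1^T\vec y_2+\vec u_2^T\vec y_1$, and I let $\mathcal D^k$ be the set of $(\vec f^k,\vec y^k,\bs\varepsilon^k,\vec u^k)$ obeying \eqref{eq:04:0330} (with $\vec e^k=\vec M^{-1}\bs\varepsilon^k$). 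It then suffices to check two things: that the self-pairing vanishes on $\mathcal D^k$, and that $\dim\mathcal D^k=\tfrac12\dim(\mathcal F^k\times\mathcal E^k)=s(n+m)$.

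For the isotropy I would observe that the self-pairing of an element $d=(\vec f^k,\vec y^k,\bs\varepsilon^k,\vec u^k)\in\mathcal D^k$ is $\langle\!\langle d,d\rangle\!\rangle = 2\big((\vec M\vec e^k)^T\vec f^k+(\vec u^k)^T\vec y^k\big)$, i.e. twice the average supplied power in \eqref{eq:04:0320}. Hence it vanishes on $\mathcal D^k$ exactly when \eqref{eq:04:0320} holds, and that balance follows from \eqref{eq:04:0270} precisely by forcing the quadratic term $(\vec e^k)^T\vec M\vec J^k\vec e^k$ to zero, which is guaranteed by either (C1) or (C2) via \eqref{eq:04:0280}--\eqref{eq:04:0290}. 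Since $\langle\!\langle d,d\rangle\!\rangle=0$ for every $d\in\mathcal D^k$, polarization upgrades this to $\langle\!\langle d_1,d_2\rangle\!\rangle=0$ for all $d_1,d_2\in\mathcal D^k$, so $\mathcal D^k\subseteq(\mathcal D^k)^{\perp}$ with respect to $\langle\!\langle\cdot,\cdot\rangle\!\rangle$.

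The remaining, and in my view principal, step is the dimension count, whose crux is the invertibility of $\vec M$. I would note that $m_{ij}=\int_0^1\ell_i\ell_j\,d\sigma$ is the Gram matrix of the Lagrange polynomials $\ell_1,\dots,\ell_s$ in $L^2([0,1])$; distinct nodes $c_i$ make these linearly independent, so the Gram matrix is positive definite and $\vec M=(m_{ij})\otimes\vec I_n$ is nonsingular. Consequently \eqref{eq:04:0330} defines a single-valued linear map $(\bs\varepsilon^k,\vec u^k)\mapsto(\vec e^k,\vec u^k)\mapsto(\vec f^k,\vec y^k)$ on all of $\mathcal E^k$, so $\mathcal D^k$ is the graph of a linear map and $\dim\mathcal D^k=s(n+m)$, exactly half of $\dim(\mathcal F^k\times\mathcal E^k)=2s(n+m)$. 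An isotropic subspace of half dimension satisfies $\mathcal D^k=(\mathcal D^k)^{\perp}$ and is therefore a Dirac structure.

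As an equivalent and more geometric route, one may observe that under (C1) or (C2) the matrix $\vec J^k\vec M^{-1}$ is skew-symmetric --- blockwise for (C1), and equal to $(m_{ij})^{-1}\otimes\vec J$ (with $\vec J$ the common constant interconnection matrix) for (C2) --- so that \eqref{eq:04:0330} realizes $\mathcal D^k$ directly as the graph of a skew map, yielding isotropy and the graph structure at once. Either way, the main obstacle is not the isotropy, which \eqref{eq:04:0320} hands us once (C1)/(C2) hold, but cleanly justifying maximality: everything rests on $\vec M$ being nonsingular, so that $\bs\varepsilon^k=\vec M\vec e^k$ is a genuine change of effort coordinates and the graph attains the correct half dimension.
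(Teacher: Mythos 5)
Your proposal is correct, but it takes a different route from the paper. The paper's proof rewrites \eqref{eq:04:0330} in \emph{kernel representation} $\vec F \bmat{\vec f^k\\ \vec y^k} + \vec E \bmat{\vec M \vec e^k\\ \vec u^k} = \vec 0$ with $\vec F = \vec I_{s(n+m)}$ and $\vec E = \bmat{\vec J^k \vec M^{-1} & \vec G^k\\ -(\vec G^k)^T & \vec 0}$, and then simply invokes Prop.~6.6.6 of \cite{schaft2017l2}: the conditions (i) $\vec E\vec F^T + \vec F\vec E^T = \vec 0$ and (ii) full row rank of $[\vec F\;\vec E]$ reduce to skew-symmetry of $\vec J^k\vec M^{-1}$ (equivalently of $\vec M\vec J^k$), which (C1) or (C2) guarantee, while the rank condition is trivial since $\vec F$ is the identity. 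You instead verify the definition of a Dirac structure from first principles: isotropy of the solution set under the canonical symmetric pairing (via the power balance \eqref{eq:04:0320} plus polarization) and maximality via a dimension count ($\mathcal D^k$ is the graph of a linear map $(\bs\varepsilon^k,\vec u^k)\mapsto(\vec f^k,\vec y^k)$, hence of dimension $s(n+m)$). The two arguments rest on the same algebraic facts — your closing ``alternative route'' via skew-symmetry of $\vec J^k\vec M^{-1}$ is essentially the paper's condition (i) — but yours is self-contained where the paper outsources the maximal-isotropy verification to a cited matrix criterion. A genuine merit of your version is that it makes explicit the invertibility of $\vec M$ (Gram matrix of the linearly independent Lagrange polynomials $\ell_1,\dots,\ell_s$, hence positive definite), a fact the paper uses tacitly when it writes $\vec J^k\vec M^{-1}$ inside $\vec E$; note only that this argument presumes the collocation points are pairwise distinct, which is implicit in the definition \eqref{eq:04:0090} of the $\ell_i$.
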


\begin{proof} Rewrite \eqref{eq:04:0330} in \emph{kernel} representation
\begin{equation}
    \label{eq:04:0340}
    \vec F \bmat{\vec f^k\\ \vec y ^k} + \vec E \bmat{\vec M \vec e^k\\ \vec u^k} = \vec 0
\end{equation}
with 
\begin{equation}
    \label{eq:04:0350}
    \vec F = \vec I_{s(n+m)}, \qquad 
    \vec E = \bmat{ \vec J^k  \vec M^{-1} & \vec G^k\\ - (\vec G^k)^T & \vec 0}.
\end{equation}
According to \cite{schaft2017l2}, Prop. 6.6.6, the subspace of discrete bond variables $(\vec f^k, \vec M \vec e^k)$ and $(\vec y^k, \vec u^k)$ on which \eqref{eq:04:0340} holds, is a Dirac structure, if (i) $\vec E \vec F^T + \vec F \vec E^T = \vec 0$ and (ii) the matrix $[ \vec F\; \vec E]$ has full row rank $s(n+m)$. The latter is obvious from $\vec F = \vec I_{s(n+m)}$. Condition (i) is satisfied if and only if $\vec J^k \vec M^{-1}$ or, equivalently, $\vec M \vec J^k$ is skew-symmetric. As shown above,  the latter holds if the continuous-time system has a constant interconnection matrix (C2) or the choice of collocation points guarantees (C1).
\end{proof}

\subsection{Discrete-time port-Hamiltonian system}
\label{subsec:04:030:060}
The discrete-time Dirac structure is now complemented by discrete-time dynamics and constitutive equations.

\begin{dfn}[Discrete-time PH system]
\label{def:04:0010}
Equations \eqref{eq:04:0330}, together with the \emph{$s$-stage discrete dynamics}
\begin{subequations}
\label{eq:04:0360}
\begin{align}
    \label{eq:04:0360a}
    \vec x_0^k &= \vec x_{s+1}^{k-1},\\
    \label{eq:04:0360b}
    \vec x_i^k &= \vec x_{0}^k - h \sum_{j=1}^{s} a_{ij} \vec f_j^k, \quad i=1,\ldots, s,\\
    \label{eq:04:0360c}
    \vec x_{s+1}^k &= \vec x_0^k - h \sum_{j=1}^s b_j \vec f_j^k,
\end{align}
\end{subequations}
with Runge-Kutta coefficients $a_{ij}$ and $b_j$ according to \eqref{eq:04:0140} and the \emph{discrete constitutive equations} 
\begin{equation}
    \label{eq:04:0370}
    \vec e_i^k = \left. \nabla H(\vec x) \right|_{\vec x = \vec x_i^k}, \qquad i=1,\ldots,s,
\end{equation}
define a \emph{discrete-time PH system}. Using \eqref{eq:04:0250} and \eqref{eq:04:0320}, the approximation of energy supplied to the storage elements on the sampling interval $I^k=$ \mbox{$[(k-1)h, kh]$} is given by
\begin{equation}
    \label{eq:04:0380}
    \Delta \tilde H^k =   h (\vec y^k)^T \vec u^k.
\end{equation}
\end{dfn}

\begin{rmk}
    Equations \eqref{eq:04:0360b} and  \eqref{eq:04:0360c} can be written in more compact form using the Kronecker product
    \begin{subequations}
		\label{eq:04:0390}
        \begin{align}
            \vec x_i^k &= \vec x_0^k - h (\vec a_i^T \otimes \vec I_n ) \vec f^k, \quad i=1,\ldots, s,\\
            \vec x_{s+1}^k &= \vec x_0^k - h (\vec b^T \otimes \vec I_n ) \vec f^k,
        \end{align}
    \end{subequations}
    with $\vec a_i^T = \bmat{a_{i1} & \ldots & a_{is}}$, $\vec b^T = \bmat{b_{1} & \ldots & b_{s}}$.
\end{rmk}

\begin{rmk}
    The \emph{structure} of the discrete-time PH models is independent of the sampling time $h$, which, however, determines the approximation quality.
\end{rmk}

\subsection{Stored energy approximation and discrete energy balance}
\label{subsec:04:030:070}
Equation \eqref{eq:04:0380} represents an approximation of the \emph{supplied energy} flow through the port $(\vec u(t), \vec y(t))$ based on the polynomial approximation of flows and efforts. On the other hand, the difference
\begin{equation}
	\label{eq:04:0391}
	\Delta \bar{H}^k = H(\vec x_{s+1}^k) - H(\vec x_{0}^k)
\end{equation}
expresses the increment of \emph{stored energy} based on a numerical integration scheme \eqref{eq:04:0360}, as opposed to the exact increment
\begin{equation}
	\label{eq:04:0392}
	\Delta {H}^k = H(\vec x(t_0^k + h)) - H(\vec x(t_0^k)).
\end{equation}

\begin{dfn}
	\label{dfn:04:0020}
	If under a given numerical integration scheme of order $p$, the increment of stored energy satisfies 
	\begin{equation}
		\label{eq:04:0393}
		\Delta \bar H^k = h (\vec y^k)^T  \vec u^k + o(h^p),
	\end{equation}
	we call \eqref{eq:04:0393} a \emph{discrete energy balance}, which is \emph{consistent} with the discretization scheme. If 
	\begin{equation}
		\label{eq:04:0394}
		\Delta \bar H^k = h (\vec y^k)^T  \vec u^k
	\end{equation}	
	we call the energy balance \emph{exact}. 
\end{dfn}

\begin{rmk}
	In Definition III.2 of \cite{celledoni2017energy}, the latter case is simply called ``discrete energy balance''. As \eqref{eq:04:0394} only holds under additional conditions, like constant structure and quadratic energy matrix, and for example under the implicit midpoint rule (see \cite{aoues2013canonical}, Section III.B and \cite{celledoni2017energy}, Section III.E), we add ``exact'' to distinguish this particular case of a discrete energy balance.
\end{rmk} 

To show the consistency of a discrete energy balance, we will show that the local approximation errors of both $\Delta \bar H^k$ and $\Delta \tilde H^k$, compared with the exact increment $\Delta H^k$ with $\vec x_0^k = \vec x(t_0^k)$, have order $o(h^p)$. To perform this analysis, we restrict ourselves to quadratic energies of the form 
\begin{equation}
	\label{eq:04:0395}
	H(\vec x) = \frac{1}{2} \vec x^T \vec Q \vec x, \qquad \vec Q = \vec Q^T > 0.
\end{equation}

\begin{thm}[Local error of stored energy]
	\label{thm:04:0020}
	For a quadratic energy \eqref{eq:04:0395}, the local energy error 
	\begin{equation}
		\label{eq:04:0398}
		\Delta \bar H^k - \Delta H^k, \qquad \vec x_0^k = \vec x(t_0^k),
	\end{equation}
	is consistent with the numerical integration scheme, i.\,e. it has order $o(h^p)$.
\end{thm}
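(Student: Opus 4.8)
The plan is to reduce the energy error entirely to the \emph{local state error} of the integration scheme, and then to exploit the quadratic form of $H$ so that the resulting energy expansion becomes an exact algebraic identity. First I would observe that, because the numerical and the exact trajectories share the same initial value $\vec x_0^k = \vec x(t_0^k)$, the terms $H(\vec x_0^k) = H(\vec x(t_0^k))$ cancel in the difference \eqref{eq:04:0398}, so that
\begin{equation}
	\Delta \bar H^k - \Delta H^k = H(\vec x_{s+1}^k) - H(\vec x(t_0^k + h)).
\end{equation}
Hence the energy error is governed solely by how $H$ evaluates at the numerical endpoint $\vec x_{s+1}^k$ versus the exact endpoint $\vec x(t_0^k + h)$.

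Next I would introduce the local state error $\bs{\delta}^k := \vec x_{s+1}^k - \vec x(t_0^k + h)$. Since the collocation scheme \eqref{eq:04:0360} realises a Runge-Kutta method of order $p$ and the step starts from the exact value $\vec x_0^k = \vec x(t_0^k)$, the standard local-error estimate from the convergence theory of collocation/RK methods (\cite{hairer2006geometric}) gives $\bs{\delta}^k = O(h^{p+1})$. This is the only point where the order $p$ enters, and it is imported directly; one must be careful to invoke the $O(h^{p+1})$ \emph{local} bound rather than the $O(h^{p})$ \emph{global} bound.

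Then I would use the quadratic structure \eqref{eq:04:0395}. Writing $H(\vec x) = \tfrac12 \vec x^T \vec Q \vec x$ and expanding about the exact endpoint, the Taylor series terminates after the second term because all derivatives of $H$ of order $\geq 3$ vanish:
\begin{equation}
	H(\vec x_{s+1}^k) - H(\vec x(t_0^k+h)) = \vec x(t_0^k+h)^T \vec Q\, \bs{\delta}^k + \tfrac12 (\bs{\delta}^k)^T \vec Q\, \bs{\delta}^k .
\end{equation}
The exact solution $\vec x(t_0^k+h)$ is bounded on the finite time horizon and $\vec Q$ is fixed, so the linear term is $O(h^{p+1})$ and the quadratic term is $O(h^{2p+2})$. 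Combining, $\Delta \bar H^k - \Delta H^k = O(h^{p+1})$, which in particular has order $o(h^p)$, as claimed.

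I expect the only nontrivial ingredient to be the local-error estimate $\bs{\delta}^k = O(h^{p+1})$; everything else is the algebra of a quadratic form. The assumption \eqref{eq:04:0395} is precisely what makes the energy expansion an exact two-term identity: for a general $H$ one would instead bound the Taylor remainder via boundedness of $\nabla^2 H$ in a neighbourhood of the trajectory, which yields the same order but requires extra regularity bookkeeping. The main obstacle, therefore, is not computational but bibliographic and notational, namely stating the order hypothesis cleanly and ensuring that the correct (local) order estimate is the one being used.
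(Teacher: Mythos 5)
Your proof is correct and takes essentially the same route as the paper: both arguments reduce the energy error to the $O(h^{p+1})$ local state error at the step endpoint (the initial terms cancelling because $\vec x_0^k = \vec x(t_0^k)$) and then finish by quadratic-form algebra, with the local Runge--Kutta error estimate as the only nontrivial ingredient. The sole difference is in that final algebra: the paper factors $\Delta \bar H^k - \Delta H^k$ as a difference of squares in the energy norm $\|\cdot\|_Q$ and applies the reverse triangle inequality (after invoking norm equivalence), whereas you use the exact two-term expansion $\vec x(t_0^k+h)^T \vec Q\, \bs{\delta}^k + \tfrac{1}{2}(\bs{\delta}^k)^T \vec Q\, \bs{\delta}^k$ --- the same estimate, organized marginally more directly.
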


\begin{proof} For an integration scheme of order $p$, the \emph{local} approximation error or consistency error (set $\vec x_0^k = \tilde{\vec x}(t_0^k) = \vec x(t_0^k)$) has order $o(h^p)$: $\| \tilde{\vec x}(t_0^k + h) - \vec x(t_0^k + h) \| \leq  C_1 h^{p+1}$, $C_1>0$. By the equivalence of norms, this holds accordingly for the energy norm $\|\vec x \|_Q := \sqrt{\frac{1}{2} \vec x^T \vec Q \vec x}$, with a different constant $C_2>0$: $\| \tilde{\vec x}(t_0^k + h) - \vec x(t_0^k + h) \|_{Q} \leq  C_2 h^{p+1}$. For the error in the supplied energy, the following estimate can be given ($t_{s+1}^k = t_0^k + h$), where the triangle inequality is used between third and fourth line ($\vec x^k_{s+1} = \tilde{\vec x}(t_{s+1}^k)$):
{\footnotesize
\begin{align}
	\nonumber
	\big| \Delta \bar H^k - \Delta H^k \big| 
		&= \left| \| \vec x_{s+1}^k \|_Q^2 - \| {\vec x}(t_{s+1}^k\hspace*{-0.25mm}) \|_Q^2 \right| \\
		\nonumber
		&\hspace*{-2cm} 
		= \left|  \left( \| \vec x_{s+1}^k \|_Q + \| {\vec x}(t_{s+1}^k) \|_Q \right) \cdot
		          \left(\| \vec x_{s+1}^k \|_Q - \| {\vec x}(t_{s+1}^k) \|_Q \right) \right| \\
		\nonumber
		&\hspace*{-2cm} 
		\leq 2 \max \left(  \|  \vec x_{s+1}^k \|_Q, \| {\vec x}(t_{s+1}^k) \|_Q  \right) \cdot
			\left|   \| \vec x_{s+1}^k \|_Q - \| {\vec x}(t_{s+1}^k) \|_Q  \right|\\
		\nonumber
		&\hspace*{-2cm} 
		\leq 2 \max \left( \| \vec x_{s+1}^k \|_Q, \| {\vec x}(t_{s+1}^k) \|_Q \right)  \cdot
			\left\| \vec x_{s+1}^k -  {\vec x}(t_{s+1}^k) \right\|_Q\\
		\label{eq:04:0430}
		&\hspace*{-2cm} 
		\leq C_3 h^{p+1}
\end{align}		
}
with $C_3 = 2 \max \left( \| \vec x^k_{s+1} \|_Q, \| {\vec x}(t_{s+1}^k) \|_Q \right) C_2$.
\end{proof}

In the following section, we will discuss Gauss-Legendre collocation and the Lobatto IIIA/IIIB pairs as examples for symplectic integration schemes. In the former case, we will prove that $\Delta \bar H^k = \Delta \tilde H^k$ holds for quadratic energies. In the latter case, we will show that $\Delta \tilde H^k$ approximates the energy increment $\Delta H^k$ with an error of identical order as $\Delta \bar H^k$, and consequently the discrete energy balance \eqref{eq:04:0393} holds.

\section{Examples and analysis of energy errors}
\label{sec:04:040}
The degrees of freedom to define a discrete-time PH system according to Definition \ref{def:04:0010} are (i) the set of collocation points $\{c_1, \ldots, c_s\}$ and (ii) the concrete coefficients $a_{ij}$ and weights $b_j$ of the RK integration scheme \eqref{eq:04:0360}. In this Section, we consider two classes of $s$-stage symplectic integration schemes: Gauss-Legendre collocation and the Lobatto IIIA/IIIB pairs for partitioned systems. In both cases, we analyze the error of the supplied energy approximation $\Delta \tilde H^k$ for quadratic energies and its relation to $\Delta \bar H^k$ in the light of Definition \ref{dfn:04:0020}.

\subsection{Gauss-Legendre collocation}
\label{subsec:04:040:010}
To define a discrete-time Dirac structure for cases with non-constant interconnection matrices, condition (C1) must be satisfied by the choice of collocation points. To have $m_{ij} = m_{ji} = 0$ for $i\neq j$, with $m_{ij}$ defined in \eqref{eq:04:0260}, the interpolation polynomials $\{ \ell_1(\tau), \ldots, \ell_s(\tau)\}$ must form a system of \emph{orthogonal} functions. This is the case, if we take the collocation points $c_1, \ldots, c_s$ as the zeros of the shifted Legendre polynomials\footnote{See \cite{hairer2006geometric}, Section II.1.3.}
\begin{equation}
	\label{eq:04:0450}
    \frac{d^s}{d\tau^s} \left( \tau^s (\tau-1)^s \right)
\end{equation}
in normalized time $\tau \in [0,1]$. With the resulting interpolation polynomials, the diagonal elements of $\vec M$,
\begin{equation}
    \label{eq:04:0460}
     m_{ii} = \int_0^1 \ell_i^2(\sigma)\, d\sigma = \int_0^1 \ell_i(\sigma)\, d\sigma = b_i,
\end{equation}
equal the weights $b_i$ in the Gauss-Legendre quadrature formula
\begin{equation}
	\label{eq:04:0470}
    \int_0^1 f(\sigma) \,d\sigma \approx \sum_{i=1}^s b_i f(c_i).
\end{equation}
This quadrature formula is \emph{exact} for polynomials $f(\tau)$ on $[0,1]$ up to order $2s-1$. With $2s$ the order of the quadrature formula, the approximation error of the integral 
\begin{equation}
	\label{eq:04:0480}
	 \int_{t_0}^{t_0+h}\!\!\!\!\!\!\!\! f(s)\, ds = h \! \int_0^1 \!\! f(t_0 + h \sigma) \, d\sigma \approx h \sum_{i=1}^s b_i f(t_0 + h c_i)
\end{equation}
is of order $\mathcal O(h^{2s+1})$. The coefficients $a_{ij}$ of the unique implicit Runge-Kutta (RK) method\footnote{See the Butcher tables in \cite{hairer2006geometric}, Section II.1.3.} of order $p=2s$ can be computed as given in \eqref{eq:04:0140}.

We now determine the conditions on the parameters of the integration scheme under which $\Delta \bar H^k = \Delta \tilde H^k$. Substituting \eqref{eq:04:0360c} in \eqref{eq:04:0391} for a quadratic energy, we have
\begin{multline}
	\label{eq:04:0500}
		\Delta \bar H^k 
		= - h (\vec x_0^k)^T \vec Q \sum_{j=1}^s b_j \vec f_j^k \\ 
		+ \frac{1}{2} h^2 (\sum_{j=1}^s b_j \vec f_j^k)^T \vec Q \sum_{j=1}^s b_j \vec f_j^k.
\end{multline}
On the other hand, with $b_j = m_{jj}$, $\vec e_j^k = \vec Q \vec x_j^k$ and \eqref{eq:04:0360c}, Equation \eqref{eq:04:0250} becomes
\begin{align}
	\nonumber
		\Delta \tilde H^k &= - h \sum_{j=1}^s (\vec e_j^k)^T m_{jj}  \vec f_j^k\\
	\label{eq:04:0510}
		&\hspace*{-0.75cm}= - h \sum_{j=1}^s (\vec x_0^k - h \sum_{l=1}^s a_{jl} \vec f_j^k )^T \vec Q b_j \vec f_j^k\\
	\nonumber
		&\hspace*{-0.75cm}= - h (\vec x_0^k)^T \vec Q \sum_{j=1}^s \! b_j \vec f_j^k 
		\!+\! h^2 \! \sum_{j=1}^s \! \Big( (\sum_{l=1}^s \! a_{jl} \vec f_l^k )^T \vec Q b_j \vec f_j^k \Big).
\end{align}
The first terms in \eqref{eq:04:0500} and \eqref{eq:04:0510} are identical. By matching the coefficients in front of $h^2 (\vec f_l^k)^T \vec Q \vec f_j^k$ in the second term, one obtains the conditions 
\begin{equation}
	\label{eq:04:0520}
	a_{ii} = \frac{1}{2} b_i, \quad a_{ij} b_i + a_{ji} b_j = b_i b_j
\end{equation}
for $i,j =1,\ldots, s$, under which $\Delta \bar H^k$ and $\Delta \tilde H^k$ coincide.

\begin{thm}
	\label{thm:04:0030}
	The $s$-stage Gauss-Legendre collocation methods, applied to PH systems with quadratic energy, are the only numerical integration schemes, which yield an exact discrete energy balance \eqref{eq:04:0394}.
\end{thm}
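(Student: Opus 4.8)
The plan is to establish \eqref{eq:04:0394} as an \emph{equivalence} with the Gauss nodes, in two directions. Since $\Delta \tilde H^k = h(\vec y^k)^T \vec u^k$ by \eqref{eq:04:0380}, an exact balance \eqref{eq:04:0394} means precisely $\Delta \bar H^k = \Delta \tilde H^k$ for every quadratic PH system \eqref{eq:04:0395}. For the sufficiency I would recall that the $s$-stage Gauss--Legendre method is symplectic, i.e. its coefficients \eqref{eq:04:0140} satisfy the conditions \eqref{eq:04:0520}; feeding these into the comparison \eqref{eq:04:0500}--\eqref{eq:04:0510} already carried out above makes the $O(h^2)$ terms agree, so that $\Delta \bar H^k = \Delta \tilde H^k$ exactly. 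The real content is the necessity: that no other choice of collocation points can reproduce this.

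For the necessity I would specialise to the autonomous case $\vec u \equiv \vec 0$. Under the conditions (C1)/(C2) guaranteeing the discrete Dirac structure, $\vec M \vec J^k$ is skew-symmetric, so $\Delta \tilde H^k = h(\vec e^k)^T \vec M \vec J^k \vec e^k = 0$; an exact balance then forces $H(\vec x_{s+1}^k) = H(\vec x_0^k)$, i.e. the scheme must conserve the quadratic energy along every linear flow $\dot{\vec x} = \vec J \vec Q \vec x$ with skew $\vec J$ and $\vec Q = \vec Q^T > 0$. Expanding the one-step increment of $\tfrac12 \vec x^T \vec Q \vec x$ and eliminating $\vec x_0^k$ via the stage equations \eqref{eq:04:0360b}, the first-order term drops by skew-symmetry and one is left with $\tfrac{h^2}{2}\sum_{i,j}(b_i b_j - b_i a_{ij} - b_j a_{ji})\,\vec k_i^T \vec Q \vec k_j$, where $\vec k_i = \vec J\vec Q\vec x_i^k$. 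Letting $\vec J$, $\vec Q$, $\vec x_0^k$ vary and expanding the stage values so that $\vec k_i = \vec v + h c_i \vec w + \dots$, the distinctness of the nodes $c_i$ separates the contributions and forces the symplecticity conditions \eqref{eq:04:0520}; equivalently, I would invoke the classical fact that a Runge--Kutta method preserves all quadratic invariants if and only if \eqref{eq:04:0520} holds.

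It then remains to show that, among collocation schemes, \eqref{eq:04:0520} singles out the Gauss nodes, and I expect this to be the main obstacle, since it is where the orthogonal-polynomial structure enters. Here I would use that the collocation coefficients \eqref{eq:04:0140} automatically satisfy the simplifying assumption $C(s)$, namely $\sum_j a_{ij} c_j^{q-1} = c_i^q/q$ for $q \le s$, and the quadrature identity $B(s)$, namely $\sum_i b_i c_i^{q-1} = 1/q$ for $q \le s$. Multiplying \eqref{eq:04:0520} by $c_i^p c_j^q$, summing over $i,j$, and using $C(s)$ to evaluate $\sum_j a_{ij} c_j^q = c_i^{q+1}/(q+1)$ and $B(s)$ to evaluate the right-hand side for all $0 \le p,q \le s-1$, one is led to $\sum_i b_i c_i^{r-1} = 1/r$ for every $r = 1,\dots,2s$, i.e. to $B(2s)$: the $s$-point quadrature is exact up to degree $2s-1$. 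Since the nodes of the unique such quadrature are the zeros of the degree-$s$ shifted Legendre polynomial \eqref{eq:04:0450}, the collocation points must be the Gauss points, which closes the equivalence. The $h$-expansions of the preceding paragraph are routine bookkeeping; the delicate step is this last moment-matching argument that upgrades the symplecticity relation to maximal quadrature order.
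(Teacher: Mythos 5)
Your proposal is correct, and its skeleton coincides with the paper's: both reduce the exact balance \eqref{eq:04:0394} to the algebraic condition \eqref{eq:04:0520} via the comparison of \eqref{eq:04:0500} and \eqref{eq:04:0510}, and both conclude by identifying Gauss--Legendre as the unique collocation scheme satisfying that condition. The genuine difference is that the paper's proof of the second step is a citation (\cite{hairer2006geometric}, Section IV.2.1, Theorem 2.2, and the remark that among collocation and discontinuous collocation methods only the Gauss methods satisfy the criterion), whereas you actually prove it. You do this in two moves: (i) for the necessity of \eqref{eq:04:0520} you specialize to $\vec u \equiv \vec 0$, note $\Delta \tilde H^k = 0$ by skew-symmetry of $\vec M \vec J^k$, so exactness forces conservation of every quadratic invariant, and the expansion $\Delta \bar H^k = \tfrac{h^2}{2}\sum_{i,j}(b_i b_j - b_i a_{ij} - b_j a_{ji})\,\vec k_i^T \vec Q\, \vec k_j$ with $\vec k_i = \vec J \vec Q \vec x_i^k$ then yields \eqref{eq:04:0520} (Cooper's criterion, whose ``only if'' direction you rightly note can simply be invoked as classical); (ii) you upgrade \eqref{eq:04:0520} to ``the nodes are Gauss points'' by the moment argument: multiplying by $c_i^p c_j^q$, summing over $i,j$, and using the collocation identities $C(s)$ and $B(s)$ gives $\bigl(\tfrac{1}{p+1}+\tfrac{1}{q+1}\bigr)\sum_i b_i c_i^{p+q+1} = \tfrac{1}{(p+1)(q+1)}$, hence $B(2s)$, i.e. maximal degree of exactness $2s-1$, which uniquely forces the nodes to be the zeros of \eqref{eq:04:0450}; since for collocation the nodes determine $a_{ij}$, $b_j$ through \eqref{eq:04:0140}, the scheme is the Gauss method. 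Your computation in (ii) checks out, and the payoff is a self-contained proof with sufficiency and necessity cleanly separated, at the cost of length and of one step left as ``routine bookkeeping'': isolating the coefficients of the vanishing quadratic form requires enough freedom in $\vec J$, $\vec Q$, $\vec x_0^k$ (equivalently, irreducibility of the method for the classical ``only if''), which does hold for collocation with distinct nodes but deserves a sentence. One caveat applies equally to you and to the paper: the uniqueness asserted in the theorem is really uniqueness \emph{among (discontinuous) collocation schemes}, the class underlying Definition \ref{def:04:0010}; your argument at least makes this restriction visible at the exact point where it enters, namely through $C(s)$ and $B(s)$.
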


\begin{proof}
	Equation \eqref{eq:04:0520} is only satisfied for Gauss-Legendre collocation, see \cite{hairer2006geometric}, Section IV.2.1, Theorem 2.2 and the paragraph below the proof\footnote{The criterion \eqref{eq:04:0520} characterizes numerical integration methods that conserve quadratic invariants. ``[A]mong all collocation and discontinuous collocation methods [\ldots] only the Gauss methods satisfy this criterion [\ldots].''}. If   \eqref{eq:04:0520} is true, $\Delta \bar H^k = \Delta \tilde H^k$ holds, and \eqref{eq:04:0394} follows from \eqref{eq:04:0380}. 
\end{proof}

\begin{rmk}
	Gauss-Legendre collocation with $s=1$ leads to the implicit midpoint rule, which is shown in \cite{aoues2013canonical} to satisfy an exact discrete energy balance. Theorem \ref{thm:04:0030} shows that this is not the only choice for structure-preserving time discretization of PH systems with exact energy balance.
\end{rmk}

\subsection{Lobatto IIIA/IIIB pairs}
\label{subsec:04:040:020}
Partitioned collocation methods, such as Lobatto pairs are used for separable Hamiltonian systems. We consider in this study the linear PH system of simple mechanical type\footnote{$\vec Q = \vec K$ denotes the stiffness matrix, $\vec P = \vec M^{-1}$ the inverse mass matrix.} with $\vec q, \vec p \in \IR^n$, $\vec u \in \IR^m$,
\begin{equation}
	\label{eq:04:0530}
	\begin{split}
	\bmat{\dot {\vec q}(t)\\ \dot {\vec p}(t)}
	&=
	\bmat{\vec 0 & \vec I\\ - \vec I & \vec 0}
	\bmat{\vec Q \vec q(t)\\ \vec P \vec p(t)}
	+
	\bmat{\vec 0\\ \vec G}
	\vec u(t)\\
	\vec y(t) &= \bmat{\vec 0 & \vec G^T} 
	\bmat{\vec Q \vec q(t)\\ \vec P \vec p(t)}.	
	\end{split}
\end{equation}
The discrete-time structure equations can be expressed as
\begin{equation}
	\label{eq:04:0540}
	\bmat{- \vec f_q^k\\ - \vec f_p^k}
	=
	\bmat{\vec 0 & \vec I_{sn}\\ - \vec I_{sn} & \vec 0}
	\bmat{\vec e_q^k\\ \vec e_p^k}
	+
	\bmat{ \vec 0\\ \vec G^k }
	\vec u^k.
\end{equation}
The elements of the effort and input vectors $\vec e_q^k, \vec e_p^k \in \IR^{sn}$, $\vec u^k \in \IR^{sm}$ are 
\begin{equation}
	\label{eq:04:0550}
	\vec e_{q,i}^k = \vec Q \vec q_i^k, \quad\!
	\vec e_{p,i}^k = \vec P \vec p_i^k, \quad\!
	\vec u_i^k = \vec u(t_0^k + c_i h).
\end{equation}
The partitioned integration scheme, which consists of two RK methods (coefficients $a_{ij}$, $\hat a_{ij}$, $b_j = \hat b_j$, $c_i = \hat c_i$, $i,j=1,\ldots, s$), each applied to one set of differential equations, can be written ($i=1,\ldots, s$)
\begin{subequations}
\label{eq:04:0560}
\begin{align}
	\label{eq:04:0560a}
	\vec q_0^k & \!=\! \vec q_{s+1}^{k-1}, & 
		\vec p_0^k & \!=\! \vec p_{s+1}^{k-1}, \\
	\label{eq:04:0560b}
	\vec q_i^k & \!=\! \vec q_0^k \!-\! h \sum_{j=1}^s a_{ij} \vec f_{q,j}^k, & 
		\vec p_i^k & \!=\! \vec p_0^k \!-\! h \sum_{j=1}^s \hat a_{ij} \vec f_{p,j}^k,\\		
	\label{eq:04:0560c}
	\vec q_{s+1}^k & \!=\! \vec q_0^k \!-\! h \sum_{j=1}^s b_{j} \vec f_{q,j}^k, & 
		\vec p_{s+1}^k & = \vec p_0^k \!-\! h \sum_{j=1}^s \hat b_{j} \vec f_{p,j}^k,
\end{align}
\end{subequations}
with $\vec f_{q,j}^k = - \vec e_{p,j}^k$, $\vec f_{p,j}^k = \vec e_{q,j}^k - \vec G^k \vec u_j^k$, $j=1,\ldots,s$.

In contrast to Gauss-Legendre collocation, the expression for the increment of a quadratic Hamiltonian per sampling interval
\begin{multline}
	\label{eq:04:0570}
		\Delta \bar H^k = \frac{1}{2} (\vec q_{s+1}^k)^T \vec Q \vec q_{s+1}^k - \frac{1}{2} \vec (q_0^k)^T \vec Q \vec q_0^k\\
		+ \frac{1}{2} (\vec p_{s+1}^k)^T \vec P \vec p_{s+1}^k - \frac{1}{2} (\vec p_0^k)^T \vec P \vec p_0^k
\end{multline}
does \emph{not} coincide with the approximate supplied energy
\begin{equation}
	\label{eq:04:0580}
	\begin{split}
		\Delta \tilde H^k &= - \int_{t_0^k}^{t_0^k +h} \tilde{\vec e}_q^T(s) \tilde{\vec f}_q(s) + \tilde{\vec e}_p^T(s) \tilde{\vec f}_p(s) \, ds\\
		&= - h (\vec e_q^k)^T \vec M \vec f_q^k - h (\vec e_p^k)^T \vec M \vec f_p^k,
	\end{split}
\end{equation}
where $\vec M$ is given by \eqref{eq:04:0260}.

We restrict our attention to the 3-stage Lobatto pair\footnote{See the Butcher tables in \cite{hairer2006geometric}, Section II.1.4.} with collocation points $c_1 = 0$, $c_2 = \frac{1}{2}$, $c_3 = 1$, in which case the matrix $\vec M$ is
\begin{equation}
	\label{eq:04:0585}
	\vec M = \bmat{\frac{2}{15} & \frac{1}{15} & -\frac{1}{30}\\ \frac{1}{15} & \frac{8}{15} & \frac{1}{15}\\ -\frac{1}{30} & \frac{1}{15} & \frac{2}{15}} \otimes \vec I_n.
\end{equation}

\begin{thm}
	\label{thm:04:0040}
	For the $3$-stage Lobatto pair, applied to the partitioned linear PH system \eqref{eq:04:0530}, the error between $\Delta \tilde H^k$ and $\Delta \bar H^k$, and consequently the error between $\Delta \tilde H^k$ and $\Delta H^k$ has order $o(h^{2s-2}) = o(h^4)$.
\end{thm}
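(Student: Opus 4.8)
The plan is to compare the two energy increments directly and algebraically, reduce their difference to two quadratic forms, estimate the order of each form in $h$, and only then invoke Theorem~\ref{thm:04:0020} to pass from $\Delta\bar H^k$ to the exact increment $\Delta H^k$. First I would substitute the stage relations \eqref{eq:04:0560b}, \eqref{eq:04:0560c} together with $\vec e_{q,i}^k=\vec Q\vec q_i^k$, $\vec e_{p,i}^k=\vec P\vec p_i^k$ into \eqref{eq:04:0570} and \eqref{eq:04:0580}. Since the energy \eqref{eq:04:0395} is separable, both increments split into a $\vec q$- and a $\vec p$-contribution without cross terms. Using $\sum_i m_{ij}=b_j$ (the Lagrange polynomials sum to one), the terms linear in $h$ coincide, and the difference reduces to
\begin{multline*}
\Delta\tilde H^k - \Delta\bar H^k = \frac{h^2}{2}\sum_{j,l}\beta_{jl}(\vec f_{q,l}^k)^T\vec Q\vec f_{q,j}^k \\ + \frac{h^2}{2}\sum_{j,l}\hat\beta_{jl}(\vec f_{p,l}^k)^T\vec P\vec f_{p,j}^k,
\end{multline*}
with $\beta_{jl}=\sum_i(m_{ij}a_{il}+m_{il}a_{ij})-b_jb_l$ and $\hat\beta_{jl}$ defined identically with $\hat a_{il}$. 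For Gauss--Legendre, where $\vec M$ is diagonal, $\beta_{jl}$ collapses to the defect of \eqref{eq:04:0520} and vanishes, recovering Theorem~\ref{thm:04:0030}; for the Lobatto pair it does not, so I must track its order.

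Next I would extract two powers of $h$ from each form by cancellation. The matrices $\beta,\hat\beta$ are symmetric with vanishing row (hence column) sums: this uses $\sum_i m_{ij}=b_j$, the identity $\sum_i m_{ij}c_i=b_jc_j$ (the $s$-point Lobatto quadrature is exact up to degree $2s-3=3=s$, so it integrates $\sigma\ell_i(\sigma)$ exactly), the consistency conditions $\sum_l a_{il}=\sum_l\hat a_{il}=c_i$, and the simplifying assumption $D(1)$, valid for both IIIA and IIIB. For a symmetric matrix with zero row sums one has $\sum_{j,l}\beta_{jl}\vec v_j^T\vec Q\vec v_l=-\tfrac12\sum_{j,l}\beta_{jl}(\vec v_j-\vec v_l)^T\vec Q(\vec v_j-\vec v_l)$, and since $\vec f_{q,j}^k-\vec f_{q,l}^k=-\vec P(\vec p_j^k-\vec p_l^k)=-\vec P\dot{\vec p}(t_0^k)(c_j-c_l)h+\mathcal O(h^2)$ by consistency of the stages, each form is $\mathcal O(h^2)$; the difference is thus a priori only $\mathcal O(h^4)$.

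The decisive step is to show that the leading $h^2$-coefficient of each form vanishes, i.e. that $\vec c^T\beta\vec c=\vec c^T\hat\beta\vec c=0$ (the coefficient being proportional to $\sum_{j,l}\beta_{jl}(c_j-c_l)^2=-2\,\vec c^T\beta\vec c$). Writing $\vec c^T\beta\vec c=2\sum_i\big(\sum_j m_{ij}c_j\big)\big(\sum_l a_{il}c_l\big)-\big(\sum_j b_jc_j\big)^2$ and using $\sum_j m_{ij}c_j=b_ic_i$, this equals $2\sum_i b_ic_i\big(\sum_l a_{il}c_l\big)-\tfrac14$. For the IIIA member ($a_{il}$), the collocation condition $C(2)$ gives $\sum_l a_{il}c_l=c_i^2/2$, so $\vec c^T\beta\vec c=\sum_i b_ic_i^3-\big(\sum_j b_jc_j\big)^2=\tfrac14-\tfrac14=0$ by $B(4)$ and $B(2)$. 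For the IIIB member ($\hat a_{il}$) condition $C(2)$ fails, and here the duality of the pair is essential: IIIB satisfies $D(2)$, i.e. $\sum_i b_ic_i\hat a_{il}=\tfrac{b_l}{2}(1-c_l^2)$, whence $\sum_{i,l}b_ic_i\hat a_{il}c_l=\tfrac12(\sum_l b_lc_l-\sum_l b_lc_l^3)=\tfrac18$ and again $\vec c^T\hat\beta\vec c=0$. For the concrete three-stage tableaux, with $\vec M$ from \eqref{eq:04:0585}, all of this may equally be checked by direct substitution. Hence each form is $\mathcal O(h^3)$ and $\Delta\tilde H^k-\Delta\bar H^k=\mathcal O(h^5)=o(h^4)$.

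Finally, the three-stage Lobatto IIIA/IIIB pair has order $p=2s-2=4$, so Theorem~\ref{thm:04:0020} gives $\Delta\bar H^k-\Delta H^k=o(h^4)$; the triangle inequality then yields $\Delta\tilde H^k-\Delta H^k=o(h^4)$, which is the claim. I expect the main obstacle to be precisely the crux $\vec c^T\beta\vec c=\vec c^T\hat\beta\vec c=0$: it is not enough that the forms see only the $\mathcal O(h)$ stage differences (that leaves merely $\mathcal O(h^4)$), and the vanishing of the leading coefficient rests on the \emph{complementary} simplifying assumptions $C(2)$ for IIIA and $D(2)$ for IIIB, rather than on stage order alone, since IIIB has stage order only $s-2=1$.
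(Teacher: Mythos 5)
Your proof is correct, but it takes a genuinely different route from the paper's. The paper proceeds by direct computation tailored to $s=3$: it exploits the symmetric node placement $c_1=0$, $c_2=\tfrac12$, $c_3=1$, Taylor-expands the boundary-stage flows $\vec f_{q,1}^k,\vec f_{q,3}^k,\vec f_{p,1}^k,\vec f_{p,3}^k$ around the midpoint stage $t_2^k$ as in \eqref{eq:04:0590}, substitutes into the difference of \eqref{eq:04:0570} and \eqref{eq:04:0580} using the concrete matrix \eqref{eq:04:0585} and the $3$-stage tableaux, and verifies that all terms through order $h^4$ cancel, leaving $F_1 h^5 + F_2 h^6$. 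You replace this computation by a structural argument, and the key steps check out: symmetrizing $\sum_i m_{ij}a_{il}$ and using $\sum_i m_{ij}=b_j$ does kill the $O(h)$ terms and yields exactly your $\beta_{jl}$, $\hat\beta_{jl}$; the zero-row-sum property follows from $\sum_i m_{ij}c_i = b_jc_j$ (valid because $\deg(\sigma\ell_j)=s\le 2s-3$ when $s=3$) together with $C(1)$ and $D(1)$; and the crux $\vec c^T\beta\vec c = \vec c^T\hat\beta\vec c = 0$ indeed rests on the complementary simplifying assumptions --- $C(2)$ with $B(4)$ for IIIA, $D(2)$ with $B(4)$ for IIIB --- both evaluating to $\tfrac14-\tfrac14=0$. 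The concluding passage to $\Delta H^k$ via Theorem \ref{thm:04:0020} and the triangle inequality is the same as in the paper (which, incidentally, cites Theorem \ref{thm:04:0010} there; the intended reference is clearly Theorem \ref{thm:04:0020}). What your route buys is significant: it exposes \emph{why} the cancellation happens (the duality of the pair), and since $s$-stage Lobatto pairs satisfy $B(2s-2)$, IIIA satisfies $C(s)$, $D(s-2)$ and IIIB satisfies $C(s-2)$, $D(s)$, every hypothesis you invoke holds for all $s\ge 3$; your argument therefore proves the statement for general $s$-stage Lobatto pairs, upgrading what the paper supports only by numerical evidence for $s=4$. What the paper's route buys is a short, mechanically checkable computation. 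Two minor points to tighten: your stage-difference estimate is written only for the $q$-flows, whereas $\vec f_{p,j}^k = \vec e_{q,j}^k - \vec G^k\vec u_j^k$ contains the input, so smoothness of $\vec u$ is needed to get $\vec f_{p,j}^k - \vec f_{p,l}^k = (c_j-c_l)h\,\vec w_p + O(h^2)$ (the paper needs this implicitly as well); and the $h$-uniformity of the remainders in your stage expansions deserves a sentence, e.g.\ via smooth dependence of the implicit stage equations on $h$.
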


\begin{proof}
	First notice that the local energy error $\Delta \bar H^k - \Delta H^k$ is of order $o(h^p)$ with $p=2s-2$ the order of the Lobatto pair, see Theorem \ref{thm:04:0010}. To prove that the error $\Delta \tilde H^k - \Delta \bar H^k$ has the same order, the expressions in \eqref{eq:04:0570} and \eqref{eq:04:0580} are subtracted, under substitution of the efforts and states in the $i$-th stage according to \eqref{eq:04:0550} and \eqref{eq:04:0560b}. We replace the terms $\vec f_{q,1}^k$, $\vec f_{q,3}^k$ and $\vec f_{p,1}^k$, $\vec f_{p,3}^k$ by their Taylor expansions
	\begin{equation}
	\label{eq:04:0590}
	\begin{split}
		\vec f_{q,1}^k \!&=\! \vec f_{q,2}^k \!-\! \dot{\vec f}_{q,2}^k \frac{h}{2} \!+\! \vec r_{1} h^2, \quad
		\vec f_{q,3}^k \!=\! \vec f_{q,2}^k \!+\! \dot{\vec f}_{q,2}^k \frac{h}{2} \!+\! \vec r_{2} h^2\\
		\vec f_{p,1}^k \!&=\! \vec f_{p,2}^k \!-\! \dot{\vec f}_{p,2}^k \frac{h}{2} \!+\! \vec r_{3} h^2, \quad
		\vec f_{p,3}^k \!=\! \vec f_{p,2}^k \!+\! \dot{\vec f}_{p,2}^k \frac{h}{2} \!+\! \vec r_{4} h^2,
	\end{split}	
	\end{equation}
	where $\dot{\vec f}_{q,2}^k = \frac{d}{dt}\tilde{\vec f}_{q}(t_2^k)$ and $\dot{\vec f}_{p,2}^k = \frac{d}{dt}\tilde{\vec f}_{p}(t_2^k)$ are the time derivatives of the polynomial flow approximations $\tilde{\vec f}_{q}$ and $\tilde{\vec f}_{p}$ in $t_2^k = t_0^k + \frac{h}{2}$. $\vec r_{1}, \ldots, \vec r_4$ are residual terms. The result is 
	\begin{multline}
	\label{eq:04:0610}
		\Delta \bar H^k - \Delta \tilde H^k 
		= F_1(\vec r_1, \ldots, \vec r_4, \dot{\vec f}_{q,2}^k, \dot{\vec f}_{p,2}^k)
		h^5  \\
		+ F_1(\vec r_1, \ldots, \vec r_4) h^6,
	\end{multline}
	where $F_1$ and $F_2$ are functions in the given arguments. This, together with the order of the error $\Delta \bar H^k - \Delta H^k$, proves the claim.
\end{proof}

As a consequence of Theorem \ref{thm:04:0040}, the application of the $3$-stage Lobatto pair to the partitioned PH system \eqref{eq:04:0530} defines a discrete-time PH system, whose discrete energy balance is consistent with the order $p = 2s-2=4$ of the numerical scheme.

Theorem \ref{thm:04:0040} shows exemplarily at the case $s=3$ how to prove the identical consistency order of both local energy approximation errors. The numerical experiments in the following section give evidence that the corresponding order statement also holds for the  $4$-stage Lobatto pair.

\section{Numerical experiments}
\label{sec:04:050}
We illustrate the quantitative statements concerning the accuracy of the energy approximations by the numerical simulation of a linear oscillator, whose solutions can be computed, and which therefore serves as a benchmark example. First, the conservative case, which has been considered throughout the paper, is studied. The accumulated errors of energy supplied through the port $(u(t), y(t))$ and stored energy are determined and illustrated for both considered families of integration schemes. In a second part, the control port is closed by constant feedback, which injects damping to the system. We show that the accuracy order of the energy approximation is maintained in the lossy case.

The considered state PH model of the lossless oscillator is given by the explicit representation of the underlying Dirac structure 
\begin{subequations}
\label{eq:04:0670}
\begin{align}
	- \vec f(t) &= \vec J \vec e(t) + \vec g u(t)\\
	y(t) &= \vec g^T \vec e(t)
\end{align}
\end{subequations}
with flow and effort vectors $\vec f, \vec e \in \IR^2$, in- and output $u, y \in \IR$.
\begin{equation}
\label{eq:04:0680}
	\vec J = \bmat{0 & 1\\ - 1 & 0}  \qquad \text{and} \qquad
	\vec g = \bmat{0\\1}
\end{equation}
denote the interconnection matrix and the input vector. The dynamics equation is $\dot{\vec x}(t) = - \vec f(t)$ with $\vec x\in \IR^2$ the state vector. The linear constitutive equations $\vec e(t) = \vec Q \vec x(t)$ are derived from the quadratic Hamiltonian $H(\vec x) = \frac{1}{2} \vec x^T \vec Q \vec x$ with  $\vec Q = \vec I$. For the lossy case, the extended output feedback
\begin{equation}
\label{eq:04:0690}
	u(t) = - r y(t) + v(t), \qquad r > 0,
\end{equation}
with new input $v(t)$, generates the damped system's state differential equation
\begin{equation}
	\label{eq:04:0700}
	\dot{\vec x}(t) = (\vec J - \vec R) \vec Q \vec x(t) + \vec g v(t), \quad \vec R = \bmat{0 & 0\\ 0 & r}.
\end{equation}

The structure equations \eqref{eq:04:0670} are discretized using collocation as described in the previous sections. First, Gauss-Legendre collocation with $s=1,2,3$ stages is used. Then, the partitioned representation of \eqref{eq:04:0670} is considered for the discretization of the Dirac structure with $3$- and $4$-stage Lobatto pairs. Discrete-time dynamics and constitutive equations are discretized according to Definition \ref{def:04:0010}, again considering the partitioned version of the state space model for the Lobatto pairs.

\subsection{Energy supply and storage in the lossless case}
\label{subsec:04:050:010}
Starting from an initial state $\vec x(0) = \bmat{q(0) & p(0)}^T = \bmat{0 & -1}^T$, the undamped system is excited by a pulse-shaped input
\begin{equation}
	\label{eq:04:0710}
	u(t) = \begin{cases}
		0, & t < 8\\
		\sin^2( \frac{t - 8}{10 - 8} \pi ), & 8 \leq t \leq 10 \\
		0, & t > 10.
	\end{cases}
\end{equation}

\begin{figure}[t]
	\centering
	\includegraphics[scale=0.8]{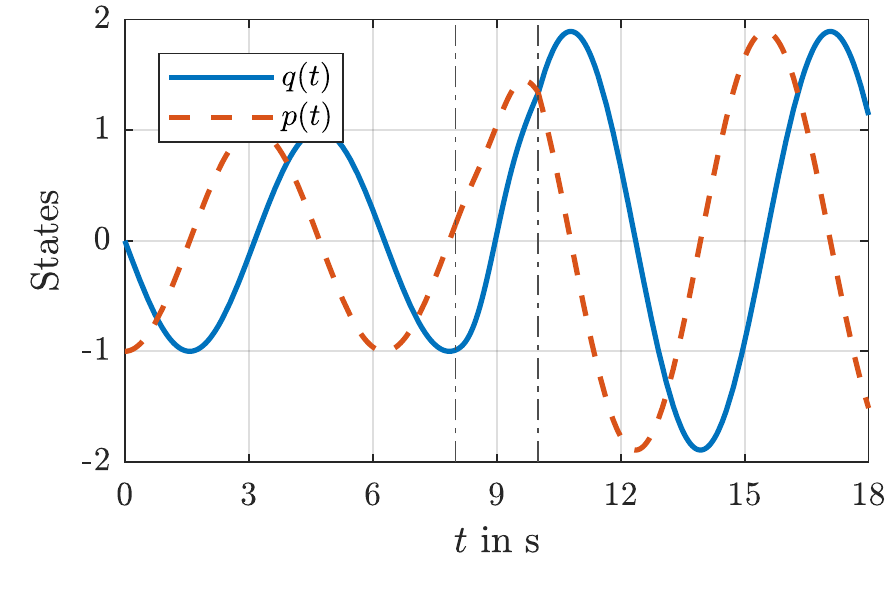}
	\qquad
	\includegraphics[scale=0.8]{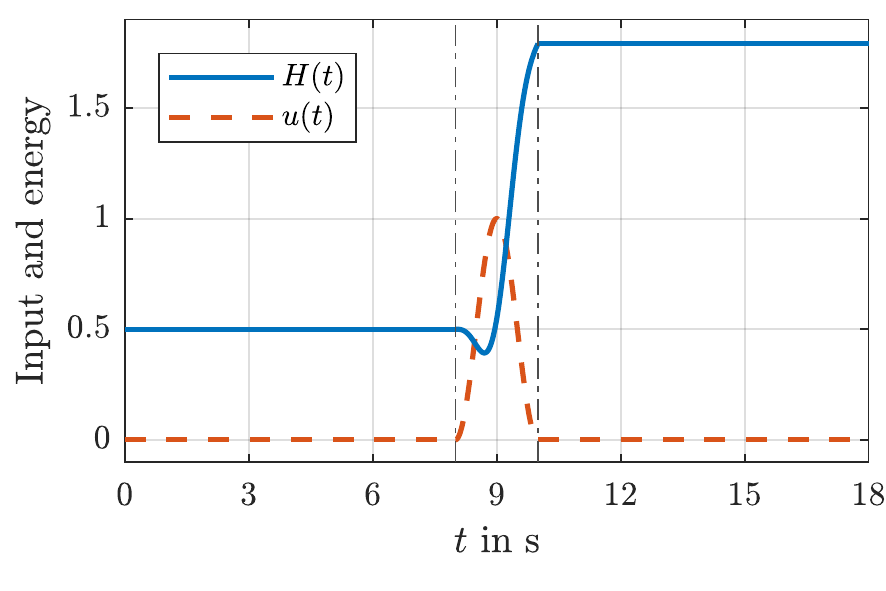}
	\caption{Evolution of states (top), input and energy (bottom) for the forced undamped oscillator}
	\label{fig:04:0020}
\end{figure}

\begin{figure}[t]
	\centering
	\includegraphics[scale=0.8]{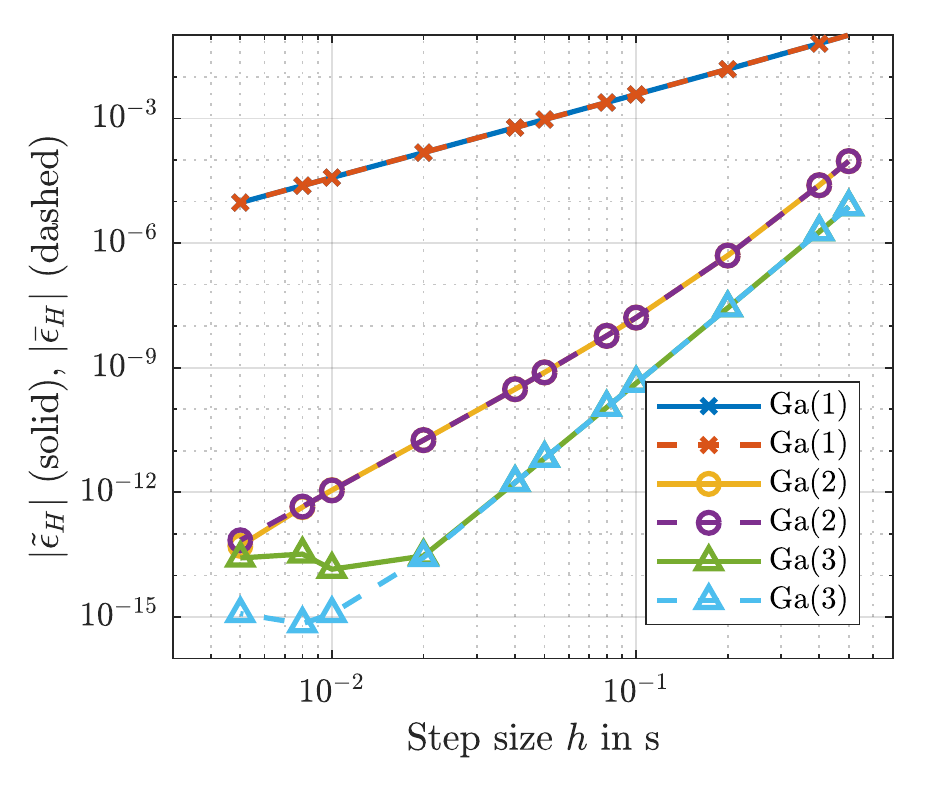}
	\qquad
	\includegraphics[scale=0.8]{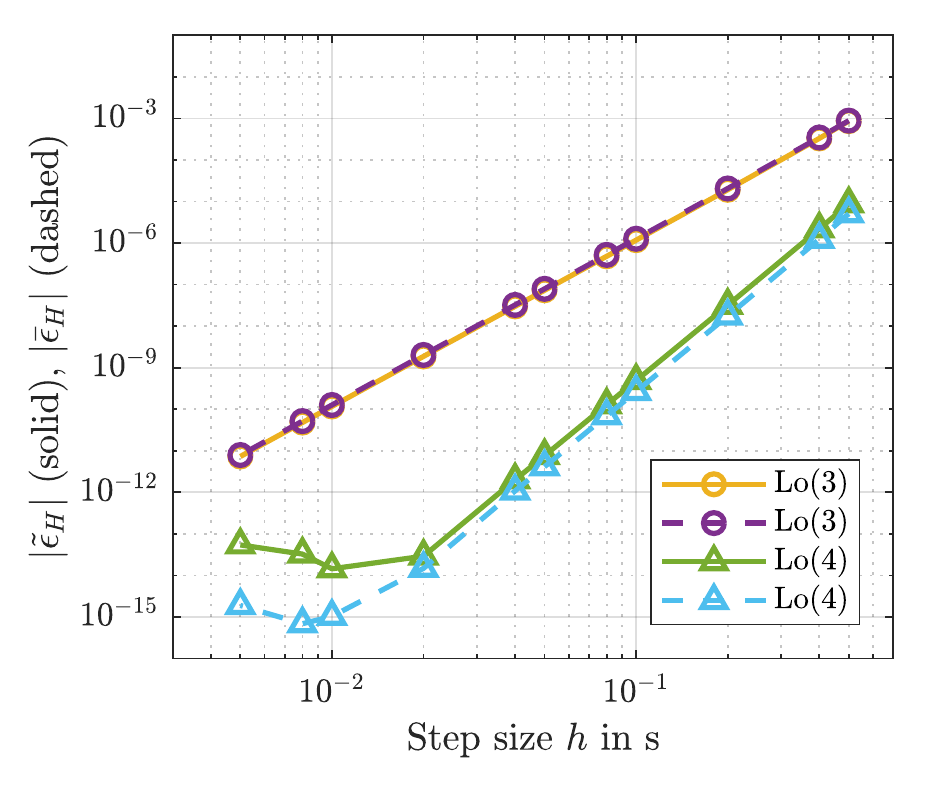}
	\caption{Errors $\tilde\epsilon_H$ and $\bar\epsilon_H$ of supplied and stored energy for Gauss-Legendre methods, $s=1,2,3$ (top) and Lobatto pairs, $s=3,4$ (bottom)}
	\label{fig:04:0030}
\end{figure}

Figure \ref{fig:04:0020} shows the exact evolutions of states, input and the quadratic energy for this test case on the interval $[0, T_{\mathrm{end}}] = [0, 18]$. Figure \ref{fig:04:0030} depicts the magnitude of relative errors
\begin{equation}
	\label{eq:04:0720}
	\tilde \epsilon_H := \frac{\Delta \tilde H_{\mathrm{tot}} - \Delta H_{\mathrm{tot}}}{\Delta H_{\mathrm{tot}}}, \qquad
	\bar \epsilon_H := \frac{\Delta \bar H_{\mathrm{tot}} - \Delta H_{\mathrm{tot}}}{\Delta H_{\mathrm{tot}}}
\end{equation}
of \emph{total} supplied and stored energy over the range of step sizes $h \in [0.005, 0.5]$. 
$\Delta H_{\mathrm{tot}} = \sum_{k=1}^N \Delta H^k$,
$\Delta \tilde H_{\mathrm{tot}} = \sum_{k=1}^N \Delta \tilde H^k$ and
$\Delta \bar H_{\mathrm{tot}} = \sum_{k=1}^N \Delta \bar H^k$
denote the total increment of energy and its approximations on $[0, T_{\mathrm{end}}]$ with $N = T_{\mathrm{end}}/h$ the number of sampling intervals. With $\Delta \bar H^k = \Delta H^k +  c^k h^{p+1}$, where $p$ is the order of the integration scheme, the absolute value of $\bar \epsilon_H$ can be bounded as follows:
\begin{equation}
\label{eq:04:0735}
	| \bar \epsilon_H | =
		\frac{|  \sum_{k=1}^N  c^k h^{p+1}|}{ |\sum_{k=1}^N \Delta H^k| }
	\leq \frac{\max_k |c^k|}{|P_{\mathrm{av}}|} h^p.
\end{equation}
$P_{\mathrm{av}} = \frac{ \Delta H_{\mathrm{tot}} }{N h}$ denotes the average transferred power, and the same estimation of order can be performed for $\tilde \epsilon_H$.

\begin{figure}[t]
	\centering
	\includegraphics[scale=0.8]{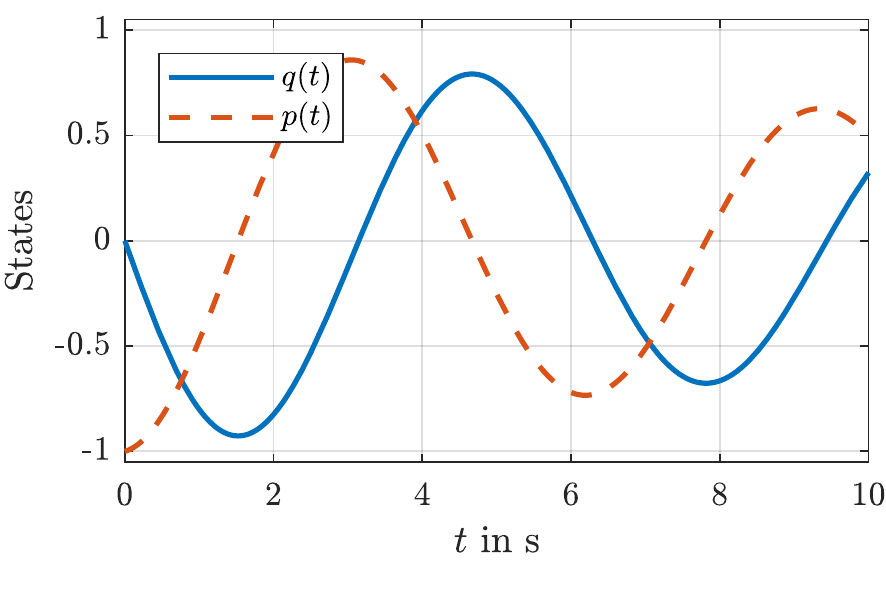}
	\qquad
	\includegraphics[scale=0.8]{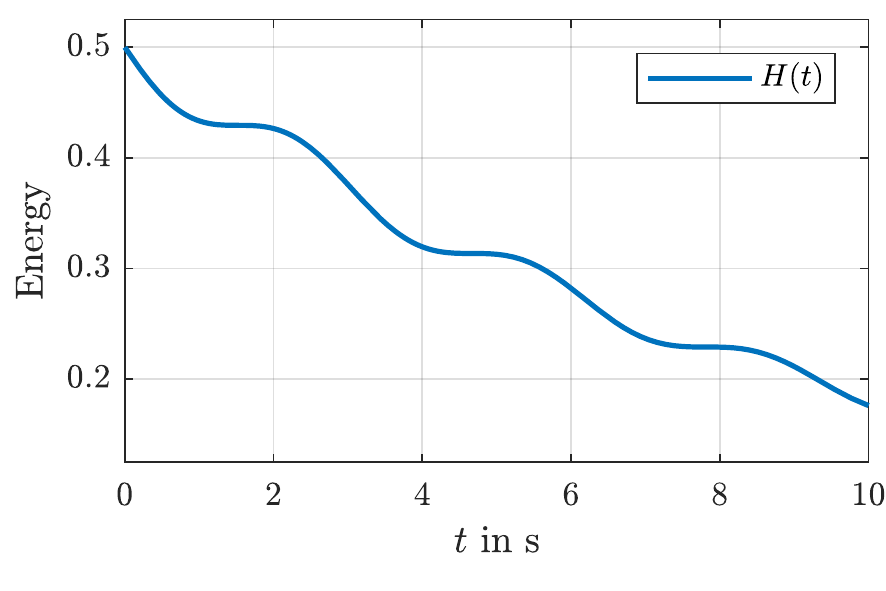}
	\caption{Evolution of the states (top) and the energy (bottom) for the damped oscillator}
	\label{fig:04:0040}
\end{figure}

\begin{figure}[t]
	\centering
	\includegraphics[scale=0.8]{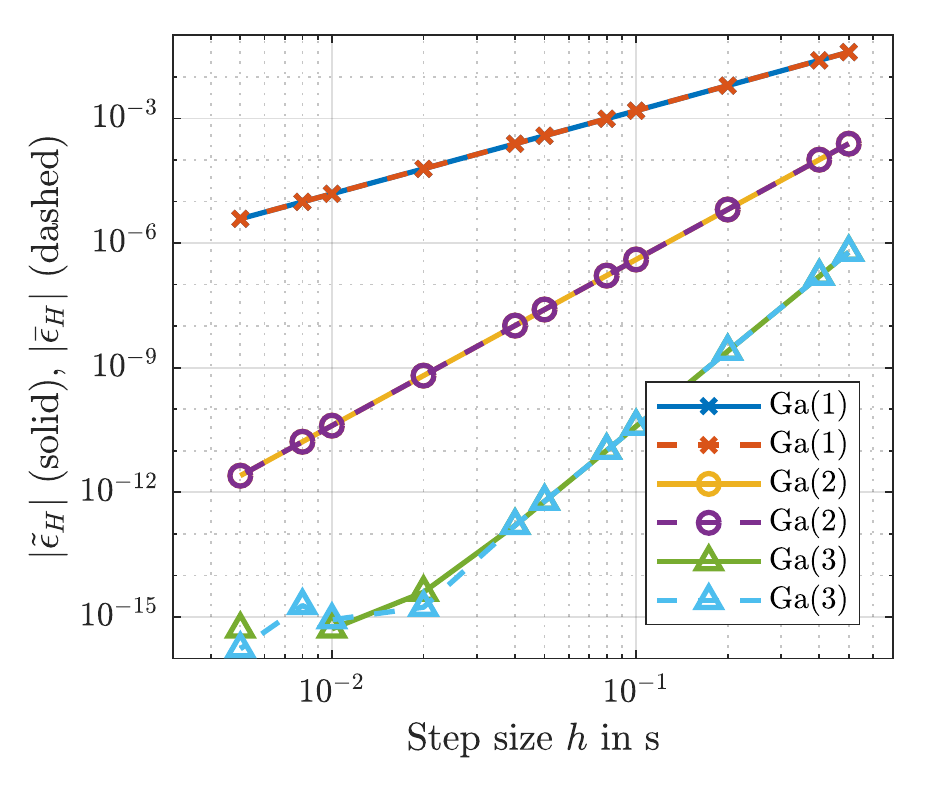}
	\qquad
	\includegraphics[scale=0.8]{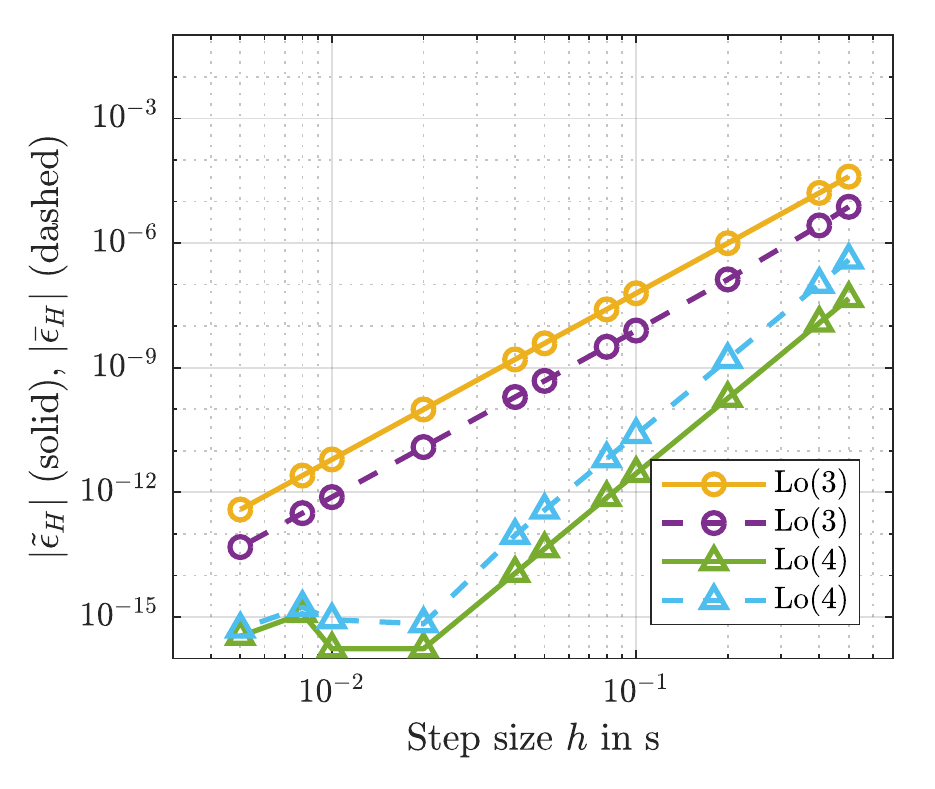}
	\caption{Errors $\tilde\epsilon_H$ and $\bar\epsilon_H$ of energy flow over the dissipative port and energy loss in the storage elements for Gauss-Legendre methods, $s=1,2,3$ (top) and the Lobatto pair, $s=3$ (bottom)}
	\label{fig:04:0050}
\end{figure}

The first diagram in Figure \ref{fig:04:0030} nicely shows the orders $2$, $4$ and $6$ of the Gauss-Legendre methods as well as the fact that both approximations $\Delta \tilde H^k$ and $\Delta \bar H^k$ of supplied and stored energy coincide. (The effect of rounding errors becomes visible at low step sizes in the curve for $s=3$.) The slopes in the second diagram confirm the orders $4$ and $6$ of the three-stage Lobatto pair. Although hardly recognizable, the two curves for $\tilde \epsilon_H$ and $\bar \epsilon_H$ do not match, which is accordance to the computations, see Eq. \eqref{eq:04:0610}.

\subsection{Approximation of dissipated energy}
\label{subsec:04:050:020}

The damped oscillator represents the most basic power-conserving interconnection of a port-Hamiltonian system (the undamped oscillator) with another system (a purely resistive element). This is nicely seen if \eqref{eq:04:0670} is combined with the damping injection feedback \eqref{eq:04:0690}. The differential energy balance in this damped case reads
\begin{equation}
	\label{eq:04:0750}
	\dot H = - r y^2 + y v \leq y v,
\end{equation}
which is the balance of power to the energy storage elements, supplied power and dissipated power. In the discrete-time setting, the damping injection output feedback becomes simply
\begin{equation}
	\label{eq:04:0760}
	\vec u^k = - r \vec y^k + \vec v^k
\end{equation} 
with discrete input $\vec u^k$ on the $k$-th sampling interval as defined in \eqref{eq:04:0200} (accordingly for $\vec v^k$) and the discrete output $\vec y^k$ according to \eqref{eq:04:0310}. By substitution in \eqref{eq:04:0380}, the approximate energy loss per time step becomes
\begin{equation}
	\label{eq:04:0770}
	\Delta \tilde H^k \!=\! - r h (\vec y^k)^T \vec y^k \!+\! h (\vec y^k)^T \vec v^k \leq  h (\vec y^k)^T \vec v^k.
\end{equation}

Figure \ref{fig:04:0040} shows the solution of \eqref{eq:04:0700} with damping parameter $r=0.1$ for $v\equiv 0$ and an initial value $\vec x(0) = \bmat{ p(0) & q(0) }^T = \bmat{ 0 & - 1}^T$ on the time interval $[0,10]$, as well as the monotonous decrease in energy. Figure \ref{fig:04:0050} depicts the magnitude of the relative errors of total dissipated energy according to \eqref{eq:04:0720}. As in the lossless case, the error plots confirm that the dissipated power is discretized consistently with the order of the underlying geometric numerical integration scheme. This time, the discrepancy between the numerical energy increments $\Delta \tilde H^k$ and $\Delta \bar H^k$ for the Lobatto pairs, which is of order $\mathcal O(h^p)$, is clearly visible in the right diagram.

\section{Conclusions}
\label{sec:04:060}

We presented a new definition of discrete-time PH systems, which is based on the approximation of the structure equations and the energy balance of explicit PH systems using the collocation method. By defining a discrete-time Dirac structure, discrete-time constitutive equations and using appropriate geometric numerical integration schemes, the separation between structure, constitutive laws and dynamics, which is a central feature of PH systems, is maintained. The presented work extends in a very natural way the notion of geometric/symplectic integration of autonomous Hamiltonian systems to the \emph{open} case (i.\,e. with power flow over the system boundary) of PH systems.

A focus has been set on proving consistency of the two different numerical energy increments that appear in the context of this definition. The family of implicit Gauss-Legendre schemes -- applied to linear PH systems -- is the only one for which the approximations of supplied and stored energy match, which leads to an \emph{exact} discrete energy balance of the discrete-time PH approximation. For Lobatto IIIA/IIIB pairs, applied to a linear PH system of partitioned mechanical structure, the discrete energy balance is not exact, but the energy error is consistent with the numerical integration scheme. The theoretical findings have been illustrated by numerical experiments with the simplest test case of a linear oscillator: The evolution of total energy, which is  (i) supplied by an external input or (ii) dissipated based on output damping injection, is approximated up to the order of the underlying integration scheme.

The presented definition of discrete-time PH systems can be exploited in the simulation and numerical analysis of large scale networks. The consistent approximation of energy flows between subsystems and the quantification of their error give important insight that helps to keep track of the quality of simulation results. In the context of network simulation, the numerical approximation of PH DAE systems \cite{schaft2013port-dae}, \cite{beattie2017port} is of particular interest. Combined with structure-preserving spatial discretization, see e.\,g. \cite{kotyczka2018weak}, the presented approach contributes to the full discretization of distributed-parameter PH systems. Moreover, the presented work gives rise to reconsider the Control by Interconnection approach, see e.\,g. \cite{ortega2008control}, for the stabilization of PH systems in discrete time. Finally, more general choices for the time discretization of effort and flow variables are conceivable, which would lead to interesting implicit representations of Dirac structures and, consequently, to implicit discrete dynamics.

\bibliographystyle{elsarticle-num}
\bibliography{koty-lefe_discrete-time-phs_v1}

\begin{thebibliography}{10}
\expandafter\ifx\csname url\endcsname\relax
  \def\url#1{\texttt{#1}}\fi
\expandafter\ifx\csname urlprefix\endcsname\relax\def\urlprefix{URL }\fi
\expandafter\ifx\csname href\endcsname\relax
  \def\href#1#2{#2} \def\path#1{#1}\fi

\bibitem{leimkuhler2004simulating}
B.~Leimkuhler, S.~Reich, {Simulating Hamiltonian Dynamics}, Vol.~14, Cambridge
  University Press, 2004.

\bibitem{hairer2006geometric}
E.~Hairer, C.~Lubich, G.~Wanner, {Geometric Numerical Integration:
  Structure-Preserving Algorithms for Ordinary Differential Equations},
  Vol.~31, Springer Science \& Business Media, 2006.

\bibitem{lew2004overview}
A.~Lew, J.~E. Marsden, M.~Ortiz, M.~West, An overview of variational
  integrators, in: Finite Element Methods: 1970’s and Beyond. Theory and
  engineering applications of computational methods, International Center for
  Numerical Methods in Engineering (CIMNE), Barcelona, 2004, pp. 1--18.

\bibitem{duindam2009modeling}
V.~Duindam, A.~Macchelli, S.~Stramigioli, H.~Bruyninckx, {Modeling and Control
  of Complex Physical Systems: The Port-Hamiltonian Approach}, Springer Science
  \& Business Media, 2009.

\bibitem{goren2008gradient}
L.~G{\"o}ren-S{\"u}mer, Y.~Yal{\c{c}}$\iota$n, Gradient based discrete-time
  modeling and control of {H}amiltonian systems, IFAC Proceedings Volumes
  41~(2) (2008) 212--217.

\bibitem{aoues2013canonical}
S.~Aoues, D.~Eberard, W.~Marquis-Favre, Canonical interconnection of discrete
  linear port-{H}amiltonian systems, in: 52nd IEEE Conference on Decision and
  Control, IEEE, 2013, pp. 3166--3171.

\bibitem{falaize2016passive}
A.~Falaize, T.~H{\'e}lie, Passive guaranteed simulation of analog audio
  circuits: a port-{H}amiltonian approach, Applied Sciences 6~(10) (2016) 273.

\bibitem{talasila2006discrete}
V.~Talasila, J.~Clemente-Gallardo, A.~J. van~der Schaft, Discrete
  port-{H}amiltonian systems, Systems \& Control Letters 55~(6) (2006) 478 --
  486.
\newblock \href {https://doi.org/10.1016/j.sysconle.2005.10.001}
  {\path{doi:10.1016/j.sysconle.2005.10.001}}.

\bibitem{celledoni2017energy}
E.~Celledoni, E.~H. H{\o}iseth, Energy-preserving and passivity-consistent
  numerical discretization of port-{H}amiltonian systems, arXiv preprint
  arXiv:1706.08621.

\bibitem{kotyczka2018discrete}
P.~Kotyczka, L.~Lef{\`e}vre, Discrete-time port-{H}amiltonian systems based on
  {G}auss-{L}egendre collocation, IFAC-PapersOnLine 51~(3) (2018) 125--130.

\bibitem{schaft2014port}
A.~J. van~der Schaft, D.~Jeltsema, et~al., {Port-Hamiltonian Systems Theory: An
  Introductory Overview}, Foundations and Trends in Systems and Control 1~(2-3)
  (2014) 173--378.

\bibitem{schaft2017l2}
A.~J. van~der Schaft, {L2-Gain and Passivity Techniques in Nonlinear Control},
  3rd Edition, Springer, 2017.

\bibitem{schaft2013port-dae}
A.~J. van~der Schaft, Port-{H}amiltonian differential-algebraic systems, in:
  Surveys in Differential-Algebraic Equations I, Springer, 2013, pp. 173--226.

\bibitem{beattie2017port}
C.~Beattie, V.~Mehrmann, H.~Xu, H.~Zwart, Port-{H}amiltonian descriptor
  systems, arXiv preprint arXiv:1705.09081.

\bibitem{kotyczka2018weak}
P.~Kotyczka, B.~Maschke, L.~Lef{\`e}vre, Weak form of {S}tokes-{D}irac
  structures and geometric discretization of port-{H}amiltonian systems,
  Journal of Computational Physics 361 (2018) 442--476.
\newblock \href {https://doi.org/10.1016/j.jcp.2018.02.006}
  {\path{doi:10.1016/j.jcp.2018.02.006}}.

\bibitem{ortega2008control}
R.~Ortega, A.~J. van~der Schaft, F.~Casta{\~n}os, A.~Astolfi, Control by
  {I}nterconnection and standard passivity-based control of port-{H}amiltonian
  systems, IEEE Transactions on Automatic Control 53~(11) (2008) 2527--2542.

\end{thebibliography}

\section*{Acknowledgement}
The work was supported by Deutsche Forschungsgemeinschaft (DFG), project KO 4750/1-1, and DFG-ANR (Agence Nationale de la Recherche), project INFIDHEM, ID ANR-16-CE92-0028.

\end{document}